\documentclass[11pt]{article}

\usepackage[letterpaper,left=1.12in,right=1.12in,top=1.00in,bottom=1.05in]{geometry}
\usepackage{amsmath,amssymb,amsthm,mathtools}
\usepackage{microtype}
\usepackage{enumitem}
\usepackage[hidelinks]{hyperref}
\usepackage{cite}

\hypersetup{
	colorlinks=true,
	linkcolor=blue,
	filecolor=blue,
	urlcolor=blue,
	citecolor=cyan,
}

\newtheorem{theorem}{Theorem}[section]
\newtheorem{lemma}{Lemma}[section]

\newtheorem{claim}{Claim}[section]

\newtheorem{conjecture}{Conjecture}[section]

\allowdisplaybreaks
\numberwithin{equation}{section}

\newcommand{\calA}{\mathcal A}
\newcommand{\calB}{\mathcal B}
\newcommand{\calF}{\mathcal F}
\newcommand{\calG}{\mathcal G}
\newcommand{\calS}{\mathcal S}

\newcommand{\calW}{\mathcal W}

\begin{document}

\title{Intersecting families of sets are usually trivial for $n\ge 2k+3$}

\author{Jiabao Yang\footnote{Email: jbyang1215@nju.edu.cn}\\
\small{School of Mathematics, Nanjing University, Nanjing 210093, P.R. China}}

\date{}

\maketitle

\vspace{1.1em}
\begin{abstract}
A family of subsets of $[n]$ is called intersecting if it contains no pair of disjoint sets. 
It is called trivial if all its members contain a common element. 
Frankl and Kupavskii, and independently Balogh, Das, Liu, Sharifzadeh, and Tran, proved that there is a constant $c>0$ such that, 
whenever $n \geq 2k+2+c\sqrt{k\ln k}$, almost all $k$-uniform intersecting families are trivial. 
Balogh, Garcia, Li, and Wagner later improved this range to $n \geq 2k+100\ln k$. 
In this paper, we prove that the same conclusion holds for every $n\geq 2k+3$.
This verifies the conjectured conclusion of Balogh, Garcia, Li, and Wagner throughout this range.
\end{abstract}
\noindent\textbf{Keywords.} Extremal combinatorics; intersecting family; Kneser graph.

\section{Introduction}

For a positive integer $n$, we write $[n]=\{1,\ldots,n\}$.
A family $\mathcal{F}\subseteq 2^{[n]}$ is called \emph{intersecting} if any two sets in $\mathcal{F}$ have a nonempty intersection. 
The classical theorem of Erd\H{o}s, Ko, and Rado~\cite{ErdosKR1961} states that if $\mathcal{F}\subseteq \binom{[n]}{k}$ is intersecting and $n\geq 2k$,
then $|\mathcal{F}|\leq \binom{n-1}{k-1}$.
A full star is the family of all $k$-subsets of $[n]$ that contain a fixed element. Since a full star has exactly
$\binom{n-1}{k-1}$ members, the bound above is best possible. More
generally, an intersecting family is called \emph{trivial} if all its
members contain a common element.
The classical Hilton--Milner theorem~\cite{HiltonMilner1967} gives the sharp upper bound for non-trivial intersecting families. If $k\geq3$, $n>2k$, and $\mathcal{F}\subseteq\binom{[n]}k$ is a non-trivial intersecting family, then
$
|\mathcal{F}|\leq\binom{n-1}{k-1}-\binom{n-k-1}{k-1}+1.
$
Thus every non-trivial intersecting family is smaller than a full star by at least $\binom{n-k-1}{k-1}-1$.
For a broad overview of intersection problems for finite sets, see Frankl and Tokushige~\cite{FranklTokushige2016}.

Let $I(n,k)$ denote the number of $k$-uniform intersecting families on $[n]$, 
and let $I(n,k,\geq 1)$ denote the number of such families that are non-trivial.
In 2015, Balogh, Das, Delcourt, Liu, and Sharifzadeh~\cite{BaloghDasDelcourtLiuSharifzadeh2015} proved the following theorem.

\begin{theorem}[Balogh, Das, Delcourt, Liu, and Sharifzadeh~\cite{BaloghDasDelcourtLiuSharifzadeh2015}]\label{thm:BaloghDasDelcourtLiuSharifzadeh2015}
For $n\ge 2k+1$ and $k\to\infty$, we have
$$I(n,k)=2^{(1+o(1))\binom{n-1}{k-1}}.$$
\end{theorem}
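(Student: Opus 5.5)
The lower bound is immediate. Every subfamily of a full star is intersecting, so $I(n,k)\ge 2^{\binom{n-1}{k-1}}$. For the upper bound I would count independent sets in the Kneser graph $K(n,k)$, because intersecting families are exactly the independent sets of this graph. The graph has $N=\binom nk$ vertices, is $D$-regular with $D=\binom{n-k}{k}$, and its independence number is $\alpha=\binom{n-1}{k-1}$ by Erd\H{o}s--Ko--Rado. The tool is the graph container method (Kleitman--Winston / Sapozhenko). For every independent set $I$ we find a small fingerprint $S\subseteq I$ with $|S|\le s$, and a container $C(S)\supseteq I$ that is determined by $S$ and satisfies $|C(S)|\le (1+o(1))\binom{n-1}{k-1}$. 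Then
\[
I(n,k)\le \sum_{j\le s}\binom{N}{j}\,2^{\max|C(S)|}\le 2^{s\log_2 N+(1+o(1))\binom{n-1}{k-1}},
\]
and the result follows once $s\log_2 N=o\bigl(\binom{n-1}{k-1}\bigr)$.

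The fingerprint is built greedily. Given $I$, fix an ordering of the current candidate set $A$, starting from $A=\binom{[n]}k$. While $|A|>(1+\varepsilon)\alpha$, take the vertex of $I\cap A$ that has maximum degree in $K(n,k)[A]$, put it into $S$, and delete it together with its neighbours in $A$. The vertices of $A$ of larger degree that precede it are not in $I$, so delete them as well. The required input is a supersaturation (stability) statement for the Kneser graph: \emph{if $|A|\ge(1+\varepsilon)\binom{n-1}{k-1}$, then $K(n,k)[A]$ has at least $\delta(\varepsilon)\,|A|\,D$ edges}. Equivalently, some vertex of $A$ has degree at least $\beta D$ inside $A$. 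Given this, each step either removes $\Omega(\beta D)$ vertices, or the max-degree vertex chosen has low degree, in which case the whole high-degree prefix is discarded. The standard Kleitman--Winston accounting then gives $s=O\!\left(\frac{N}{\beta D}\log\frac{N}{\alpha}\right)$.

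The supersaturation itself would come from spectral information. The eigenvalues of the Kneser graph are $(-1)^i\binom{n-k-i}{k-i}$, and the smallest is $\lambda_{\min}=-\binom{n-k-1}{k-1}=-\frac kn\cdot\frac{n}{n-k}D$. The expander mixing lemma (Hoffman-type bound) gives
\[
e(A)\ge \frac{|A|}{2}\Bigl(\frac{D|A|}{N}+\lambda_{\min}\Bigl(1-\frac{|A|}{N}\Bigr)\Bigr),
\]
and the right-hand side is positive and of order $\varepsilon D|A|$ as soon as $|A|\ge(1+\varepsilon)\alpha$, since $\alpha/N=-\lambda_{\min}/(D-\lambda_{\min})$. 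This yields $\beta\asymp\varepsilon$ up to factors depending on $n/k$.

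The main obstacle is the final numerical comparison. We need
\[
\frac{N}{\varepsilon D}\,\log N\cdot\log\frac{N}{\alpha}=o\!\left(\binom{n-1}{k-1}\right).
\]
When $n\ge 2k+1$, the ratio $N/D=\binom nk/\binom{n-k}k$ can be exponentially large in $k$ (for instance about $4^k$ times polynomial factors when $n=2k+1$). It must be checked against $\binom{n-1}{k-1}\approx N\cdot k/n$, which is itself exponentially large. Here $N/D\le \binom{n}{k}/\binom{k+1}{k}$, and $\binom{n-1}{k-1}\cdot D/N=\frac kn\binom{n-k}{k}$, which is at least about $k/2$ when $n=2k+1$; that margin is too weak for a single-round estimate. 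So I expect to iterate the container step, refining containers within containers as in Balogh--Morris--Samotij. In the later rounds I would replace the crude mixing bound by Hilton--Milner-type stability: a family of size close to $\alpha$ with few disjoint pairs must be close to a star. Inside a near-star, the sets avoiding the centre each have $\binom{n-k-1}{k-1}$ neighbours in the star, which supplies large degree and shrinks the fingerprint until the total count becomes $2^{o(\binom{n-1}{k-1})}$. That two-stage argument, a coarse spectral stage followed by a stability stage near stars with $k\to\infty$ absorbing the polynomial losses, is where I expect the real work to lie.
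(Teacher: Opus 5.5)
Your framework is the right one, and a single round of it already proves the theorem. The gap is that you abandon it because of a counting error and replace the finish with an unspecified iteration, so as written the upper bound is not established.

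The obstacle you found comes entirely from bounding $\sum_{j\le s}\binom Nj$ by $N^s$. This loses a factor $\log N/\log(eN/s)$; for $n=2k+1$ that factor is of order $k/\log k$, far more than the margin you were missing.

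Here is the corrected count. Run the pruning so that each fingerprint vertex is, when chosen, a maximum-degree vertex of the current active set $A$, deleting non-members of $I$ one at a time as in Lemma~\ref{lem:globalcontainer}. While $|A|>(1+\varepsilon)\binom{n-1}{k-1}$, your Hoffman bound gives average degree greater than $\varepsilon d'$ in $A$, where $d'=\binom{n-k-1}{k-1}=-\lambda_{\min}$. Hence $s\le N/(\varepsilon d')$, with no $\log(N/\alpha)$ factor, and
\[
\log_2\sum_{j\le s}\binom Nj\le s\log_2\frac{eN}{s}\le\frac{N}{\varepsilon d'}\log_2(e\varepsilon d')=\frac nk\cdot\frac{\log_2(e\varepsilon d')}{\varepsilon d'}\binom{n-1}{k-1}.
\]
We have $d'=k$ when $n=2k+1$, and $d'\ge\binom{n-k-1}{2}\ge n^2/16$ when $n\ge 2k+2$ and $k\ge 3$. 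From this one checks that $\frac nk\cdot\frac{\log_2(ed')}{d'}=O(\log k/k)$ uniformly in $n\ge 2k+1$. Take $\varepsilon=k^{-1/2}$. Then both $s$ and the logarithm of the number of fingerprints are $o\bigl(\binom{n-1}{k-1}\bigr)$. Every container has size at most $s+(1+\varepsilon)\binom{n-1}{k-1}=(1+o(1))\binom{n-1}{k-1}$, and $I(n,k)\le 2^{(1+o(1))\binom{n-1}{k-1}}$ follows.

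So your second, stability-based stage is unnecessary. As sketched, it would not have repaired the loss you diagnosed anyway. The degree of a non-star set into a near-star container is $\binom{n-k-1}{k-1}$, which is exactly $-\lambda_{\min}$, the quantity the spectral bound already supplies. The deficit was in counting fingerprints, not in supersaturation.

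The paper itself does not prove this theorem; it quotes it from Balogh, Das, Delcourt, Liu, and Sharifzadeh. Still, the corrected one-round argument is what its Lemma~\ref{lem:globalcontainer} and Theorem~\ref{thm:spectrum} contain. The entropy estimate $\log|\mathfrak C|\le\frac{\binom nk}{L}\log(eL)$ there is the sharp fingerprint count. Running that proof with $L=\varepsilon\binom{n-k-1}{k-1}$ for suitable $\varepsilon\to 0$, Theorem~\ref{thm:spectrum} bounds every container by $(1+\varepsilon n/k)\binom{n-1}{k-1}$. The Das--Tran stability theorem is needed in the paper only for the much finer statement that almost all intersecting families are trivial.
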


Improving a result of Balogh, Das, Delcourt, Liu, and Sharifzadeh~\cite{BaloghDasDelcourtLiuSharifzadeh2015}, 
Frankl and Kupavskii~\cite{FranklKupavskii2018}, and independently Balogh, Das, Liu, Sharifzadeh, and Tran~\cite{BaloghDasLiuSharifzadehTran2019},
proved that there is a constant $c>0$ such that, if $n\geq 2k+2+c\sqrt{k\ln k}$, 
then almost all $k$-uniform intersecting families are trivial. 
More precisely, Frankl and Kupavskii~\cite{FranklKupavskii2018} proved the result with $c=2$, while Balogh, Das, Liu, Sharifzadeh, and Tran~\cite{BaloghDasLiuSharifzadehTran2019} proved it for a sufficiently large absolute constant $c$.

\begin{theorem}[Frankl and Kupavskii~\cite{FranklKupavskii2018}, and Balogh, Das, Liu, Sharifzadeh, and Tran~\cite{BaloghDasLiuSharifzadehTran2019}]\label{thm:FranklKupavskii2018}
For $n\ge 2k+2+c\sqrt{k\ln k}$ and $k\to\infty$, we have
$$I(n,k)=(n+o(1))2^{\binom{n-1}{k-1}} \quad \text{and} \quad I(n,k,\geq 1)=o\left(2^{\binom{n-1}{k-1}}\right),$$
where $c =2$ in~\cite{FranklKupavskii2018} and $c$ was a large constant in~\cite{BaloghDasLiuSharifzadehTran2019}.
\end{theorem}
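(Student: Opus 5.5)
The plan is to show that non-trivial intersecting families are negligible, $I(n,k,\ge 1)=o\bigl(2^{\binom{n-1}{k-1}}\bigr)$; the asymptotics for $I(n,k)$ then follow by counting trivial families. There are $n$ full stars, each with $2^{\binom{n-1}{k-1}}$ subfamilies. A family contained in two stars lies in the star of a pair, and there are only $\binom n2 2^{\binom{n-2}{k-2}}=o\bigl(2^{\binom{n-1}{k-1}}\bigr)$ such families. Inclusion--exclusion therefore gives $(n+o(1))2^{\binom{n-1}{k-1}}$ trivial families.

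\textbf{Step 1: decomposition.} Take a non-trivial intersecting family $\calF$ and let $x$ be an element of maximum degree. Write $\calF=\calF(x)\cup\calA$, where $\calA$ consists of the members of $\calF$ missing $x$; by non-triviality $\calA\neq\emptyset$. Every $A\in\calA$ forbids from $\calF(x)$ all $k$-sets $\{x\}\cup B$ with $B\cap A=\emptyset$, and there are $\binom{n-k-1}{k-1}$ of these. Hence $\calF(x)$ lies in the star of $x$ minus the set $N(\calA)$ of its neighbours in the Kneser graph. The count of non-trivial families is then at most
\[
\sum_{x}\sum_{\calA}2^{\binom{n-1}{k-1}-|N(\calA)|},
\]
summed over nonempty intersecting $\calA\subseteq\binom{[n]\setminus\{x\}}{k}$.

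\textbf{Step 2: small $\calA$.} Split by $s=|\calA|$. For small $s$, the bipartite Kneser graph between $\binom{[n]\setminus\{x\}}{k}$ and $\binom{[n]\setminus\{x\}}{k-1}$ has good expansion, so $|N(\calA)|$ is at least about $s\binom{n-k-1}{k-1}$ minus overlaps; a Kruskal--Katona / Hilton--Milner style argument bounds the overlaps. The number of choices of $\calA$ is at most $\binom{\binom{n-1}{k}}{s}\le\binom{n-1}{k}^s$. Since $n\ge 2k+1$ gives $\binom{n-k-1}{k-1}\gg k\ln n$, we get $\binom{n-1}{k}^s2^{-|N(\calA)|}\le 2^{-cs\binom{n-k-1}{k-1}}$, which summed over $s$ and multiplied by $n$ is $o(1)$.

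\textbf{Step 3: large $\calA$.} Here a full expansion count is too weak, so I would switch to a container (or graph-counting) argument. The family $\calF$ is an independent set in the Kneser graph $KG(n,k)$. I would build containers via the Hilton--Milner stability (Frankl's degree version): a large intersecting family with maximum degree $\Delta$ has total size at most about $\Delta+\binom{n-1}{k-1}-\binom{n-k-1}{k-1}$. Maximality of $x$ forces $|\calA|\le|\calF(x)|$, and the Kneser neighbourhood removal shows $|\calF|$ is at most $\binom{n-1}{k-1}-\Omega\bigl(\binom{n-k-2}{k-2}\bigr)$ whenever $\calA$ is not tiny. A supersaturation-based container lemma gives $2^{o(\binom{n-1}{k-1})}$ containers, each of size at most $\binom{n-1}{k-1}-\Omega\bigl(\binom{n-k-2}{k-2}\bigr)$, and the total count is then $o\bigl(2^{\binom{n-1}{k-1}}\bigr)$.

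\textbf{Main obstacle.} The hard part is the threshold between Steps 2 and 3 when $n=2k+2+c\sqrt{k\ln k}$. At that point the neighbourhood deficit $\binom{n-k-1}{k-1}$ exceeds the entropy $\log\binom{n-1}{k}\approx k\log\frac{n}{k}$ per element of $\calA$ only by a polynomial margin. Moreover, the overlaps $|N(A)\cap N(A')|=\binom{n-k-1-|A\setminus A'|}{k-1}$, bounded as in Step 2, must be controlled uniformly. I would handle this by organising $\calA$ according to its cover structure, namely a minimal transversal $T$ of $\calA$. Using $n-2k\ge c\sqrt{k\ln k}$, the ratio $\binom{n-k-2}{k-1}/\binom{n-k-1}{k-1}=\frac{n-2k}{n-k-1}$ exceeds $\sqrt{\ln k/k}$, so each additional element of $\calA$ still contributes a fresh $\Omega\bigl(\sqrt{\ln k/k}\bigr)$ fraction of new neighbours. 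That fraction beats the $k\ln n$ entropy exactly in this range, which is where the constant $c$ (e.g.\ $c=2$) enters.
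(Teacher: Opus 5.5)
The paper does not prove this statement. It is quoted from the literature, and within the paper its range is already covered by Theorem~\ref{thm:BaloghGarciaLiWagner2024}, since $2+c\sqrt{k\ln k}\ge 100\ln k$ for large $k$. So your proposal has to stand on its own.

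Your count of trivial families is correct. Your Step~1 is also correct, and it is the same encoding the paper uses near a fixed star: $\calB\subseteq Y\setminus N(\calA)$, with weight $2^{\binom{n-1}{k-1}-|N(\calA)|}$. The argument fails in Step~3, for two reasons.

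\textbf{The arithmetic does not close.} You have $2^{o(\binom{n-1}{k-1})}$ containers, each of size at most $\binom{n-1}{k-1}-\Omega\bigl(\binom{n-k-2}{k-2}\bigr)$. The product is $o\bigl(2^{\binom{n-1}{k-1}}\bigr)$ only if the number of containers is $2^{o(\binom{n-k-2}{k-2})}$. In the critical range $n=2k+O(\sqrt{k\ln k})$,
$\binom{n-k-1}{k-1}\le\bigl(\frac{n-k-1}{n-1}\bigr)^{k-1}\binom{n-1}{k-1}=2^{-(1-o(1))k}\binom{n-1}{k-1}$.
So the saving that non-triviality buys is exponentially smaller than what a supersaturation-based Kneser container lemma can pay for. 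For example, the Kleitman--Winston construction of Lemma~\ref{lem:globalcontainer} has $\log|\mathfrak C|$ of order $\binom{n-1}{k-1}/\binom{n-k-1}{k-1}$ up to polynomial factors, which here is far larger than $\binom{n-k-1}{k-1}$.

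\textbf{The container sizes are unjustified.} A container is a superset of the family. Supersaturation and stability (Theorems~\ref{thm:spectrum} and~\ref{thm:DT}) only say that a container with few disjoint pairs has size $(1+o(1))\binom{n-1}{k-1}$ and is close to a star. A container of a non-trivial family may contain a whole star, so the Hilton--Milner/Frankl bound on $|\calF|$ does not transfer to the container.

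\textbf{The real obstacle is the regime your Step~2 cannot reach.} Your linear bound $|N(\calA)|\ge c\,s\binom{n-k-1}{k-1}$ is impossible once $s\gg\binom{n-1}{k-1}/\binom{n-k-1}{k-1}$, because $|N(\calA)|\le\binom{n-1}{k-1}$. When $|\calA|$ is a small constant fraction of $|X|$, Kruskal--Katona (Theorem~\ref{thm:isoperimetry}(i)) gives only $|N(\calA)|\ge(1+O(q/k))|\calA|$. This is essentially sharp for intersecting families such as all $k$-sets through a fixed small set. Meanwhile an $s$-set costs $\log\binom{|X|}{s}\approx s\log(e|X|/s)$ bits. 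So the union bound "number of $s$-sets times $2^{-\min|N(\calA)|}$" fails, and you must exploit the structure of those $\calA$ whose neighbourhoods are small.

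Your last paragraph also misplaces the difficulty. First, $\binom{n-k-1}{k-1}$ is superpolynomial in $k$, so the per-set comparison with $k\log n$ is never tight. Second, $\frac{n-2k}{n-k-1}$ is the overlap fraction $|N(A)\cap N(A')|/|N(A)|$ for $|A\setminus A'|=1$, not a fresh fraction.

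\textbf{What is missing is a local structure theory for $\calA$.} In the paper's proof of its stronger Theorem~\ref{thm:main}, the argument has two levels:
\begin{enumerate}
\item Global containers plus Das--Tran stability are used only to place $\calF$ within $\zeta\binom{n-1}{k-1}$ of a star. The required saving there is only $\delta\frac qk\binom{n-1}{k-1}$, which the container count does beat.
\item Relative to that star, large closures are handled by entropy: $|\calA|\le\zeta\binom{n-1}{k-1}$ against $|N(\calA)|\ge\gamma\binom{n-1}{k-1}$.
\item Small closures are handled by splitting $\calA$ into $2$-linked components, which have disjoint neighbourhoods. Each component with $|N(\calA_i)|=g$ then costs at most $2^{g/2}$ when $g\le d_X^3$ (a connected-set count). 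For $g>d_X^3$ it costs at most $2^{g(1-1/(k\sqrt{d_X}))}$, via Sapozhenko's bipartite container lemma (Theorem~\ref{thm:biregular-container}) and a reconstruction step. This replaces the cost $\binom{|X|}{|\calA_i|}$.
\end{enumerate}
Some such local container argument, or another genuinely new counting idea, is needed in place of your Step~3.
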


Later, Balogh, Garcia, Li, and Wagner~\cite{BaloghGarciaLiWagner2024} extended the range in Theorem~\ref{thm:FranklKupavskii2018} as follows.

\begin{theorem}[Balogh, Garcia, Li, and Wagner~\cite{BaloghGarciaLiWagner2024}]\label{thm:BaloghGarciaLiWagner2024}
For $n\ge 2k+100\ln k$ and $k\to\infty$, we have
$$I(n,k)=(n+o(1))2^{\binom{n-1}{k-1}} \quad \text{and} \quad I(n,k,\geq 1)=o\left(2^{\binom{n-1}{k-1}}\right).$$
\end{theorem}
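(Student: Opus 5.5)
Since this statement (Theorem~\ref{thm:BaloghGarciaLiWagner2024}) is a cited result, the natural plan is to reproduce the standard counting argument, with the $100\ln k$ slack entering in a single key inequality. The lower bound $I(n,k)\ge (n-o(1))2^{\binom{n-1}{k-1}}$ comes from counting subfamilies of the $n$ full stars: by inclusion--exclusion, subfamilies contained in two stars number at most $\binom n2 2^{\binom{n-2}{k-2}}=o(2^{\binom{n-1}{k-1}})$. It also suffices to show $I(n,k,\ge1)=o(2^{\binom{n-1}{k-1}})$. The trivial families number at most $n2^{\binom{n-1}{k-1}}$, so the asymptotic for $I(n,k)$ then follows.

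To count non-trivial intersecting families, I will use the container method on the Kneser graph $KG(n,k)$. Intersecting families are exactly its independent sets. The graph is regular of degree $d=\binom{n-k}{k}$, and its smallest eigenvalue is $-\binom{n-k-1}{k-1}$. A supersaturation/container lemma produces a collection $\mathcal C$ of $2^{o(\binom{n-1}{k-1})}$ containers, each of size at most $\binom{n-1}{k-1}+o(\cdot)$, with every intersecting family lying in some container.

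The sharper step is a stability version. By Hilton--Milner-type stability, every container $C$ either lies mostly within a star $\mathcal S_i$ up to few extra sets, or has size at most $(1-\varepsilon)\binom{n-1}{k-1}$. Containers of the second type contribute $2^{|\mathcal C|\cdot o(1)}2^{(1-\varepsilon)\binom{n-1}{k-1}}=o(2^{\binom{n-1}{k-1}})$ families.

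For a non-trivial $\mathcal F$ inside a container near $\mathcal S_i$, I fix a set $B\in\mathcal F$ with $i\notin B$. Write $\mathcal F=\mathcal F_i\cup\mathcal F_{\bar i}$, where $\mathcal F_{\bar i}$ is the part avoiding $i$ and is nonempty. Every member of $\mathcal F_i$ must meet $B$, so $\mathcal F_i$ avoids the $\binom{n-k-1}{k-1}$ sets of $\mathcal S_i$ disjoint from $B$. More generally, $\mathcal F_i$ avoids every set of $\mathcal S_i$ disjoint from some member of $\mathcal F_{\bar i}$.

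I then sum over $t=|\mathcal F_{\bar i}|$. The key input is a lower bound on the \emph{shadow}: the sets in $\mathcal S_i$ disjoint from some member of a $t$-family in $\binom{[n]\setminus\{i\}}{k}$ number at least $f(t)$. This follows from Kruskal--Katona/Lov\'asz applied to complements, or from the expansion of the Kneser graph. It gives the bound
\[
I(n,k,\ge1)\le n\sum_{t\ge1}\binom{\binom{n-1}{k}}{t}\,2^{\binom{n-1}{k-1}-f(t)}
\]
(restricted to $t$ allowed by the container), and the task is to show this sum is $o(2^{\binom{n-1}{k-1}})$.

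The main obstacle is the range of small $t$. For $t=1$ we have $f(1)=\binom{n-k-1}{k-1}$, and we need $n\binom{n-1}{k}2^{-\binom{n-k-1}{k-1}}\to0$. When $n-2k=\Theta(\ln k)$, the ratio $\binom{n-k-1}{k-1}/\binom{n-1}{k}$ is only about $2^{-2k}$ times polynomial factors. Even so, $\binom{n-k-1}{k-1}$ is exponentially large compared with $\log\binom{n-1}k$, so the $t=1$ term is safe.

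The delicate regime is intermediate $t$, with $\mathcal F_{\bar i}$ clustered, for instance all of its sets containing a common $j$. There the growth of $f(t)$ with $t$ must beat $\log\binom{\binom{n-1}{k}}{t}\approx tk\ln(n/k)$. I would handle it by splitting further according to whether $\mathcal F_{\bar i}$ has a covering element $j$. If it does, the counting is against the structured two-star family, which I bound directly. If it does not, Frankl's degree/diversity bound gives $f(t)\ge c\binom{n-k-1}{k-1}\cdot\min(t,\text{poly})$. The requirement that $(n-2k)/k$ beat the $\ln k$ losses in these estimates is exactly where the hypothesis $n\ge 2k+100\ln k$ is used.
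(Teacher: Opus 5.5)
Your treatment of the trivial families and your reduction to containers close to a star are fine in outline. One caveat: a container is not itself intersecting, so the ``Hilton--Milner-type stability'' you invoke must really be a removal lemma for families with few disjoint pairs, such as Theorem~\ref{thm:DT}.

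The genuine gap is the local count. The bound
\[
I(n,k,\ge1)\le n\sum_{t\ge1}\binom{\binom{n-1}{k}}{t}2^{\binom{n-1}{k-1}-f(t)}
\]
is valid, but its right-hand side is not $o\bigl(2^{\binom{n-1}{k-1}}\bigr)$ near $n=2k+100\ln k$. This holds however you restrict $t$ to sizes a near-star container allows. Here is an explicit bad term.

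\begin{itemize}[leftmargin=2.0em]
\item Let $\mathcal F_{\bar i}$ be all $k$-subsets of $[n]\setminus\{i\}$ containing a fixed $\ell$-set $D$, so $t=\binom{n-1-\ell}{k-\ell}$.
\item Since $\binom{n-1}{k}/t=\prod_{r=0}^{\ell-1}\frac{n-1-r}{k-r}\ge2^\ell$, you get $t\le2^{1-\ell}\binom{n-1}{k-1}$ and $\log\binom{\binom{n-1}{k}}{t}\ge\ell t$.
\item The star sets disjoint from some member are exactly the sets $\{i\}\cup B$ with $B\cap D=\varnothing$. Writing $q=n-2k$, this gives
\[
f(t)\le\binom{n-1-\ell}{k-1}=t\prod_{s=0}^{q-1}\frac{k+s}{k+s-\ell+1}\le t\exp\Bigl(\frac{(\ell-1)q}{k-\ell+1}\Bigr).
\]
\item For $q=\lceil100\ln k\rceil$ and $\ell=O(\ln k)$ this is $(1+o(1))t$. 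So the $t$-th summand is at least $2^{\binom{n-1}{k-1}+(\ell-1-o(1))t}$.
\item Taking $\ell\ge\max\{3,\log(2/\varepsilon)\}$ makes $t$ admissible for any container with at most $\varepsilon\binom{n-1}{k-1}$ non-star sets.
\end{itemize}

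Put simply, naming $\mathcal F_{\bar i}$ among all $t$-subsets costs about $\ell$ bits per set. The Kruskal--Katona gain is only about $(\ell-1)q/k$ bits per set. So no first-moment sum using the worst-case shadow $f(t)$ can converge in this range.

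Your covering-element split does not repair this.
\begin{itemize}[leftmargin=2.0em]
\item The extremal cluster above has a covering element (any point of $D$), so it lands in the case you plan to ``bound directly''. But counting non-trivial families inside $\mathcal S_i\cup\mathcal S_j$ means counting cross-intersecting pairs in $\binom{[n]\setminus\{i,j\}}{k-1}$. That is the same problem for $(n-2,k-1)$ with the same $q$.
\item In the non-covering case, modify the cluster Hilton--Milner style: take a $k$-set $G\not\ni i$ with $G\cap D=\varnothing$, keep only the cluster members meeting $G$, and add $G$. This family has no covering element, yet its shadow is essentially the same. A lower bound of order $\binom{n-k-1}{k-1}\min(t,\mathrm{poly})$ is useless once $t$ exceeds the polynomial.
\end{itemize}

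What is missing is an encoding whose cost is charged against the shadow size $g=|N(\mathcal A)|$, not against $\log\binom{\binom{n-1}{k}}{t}$. The paper takes this statement from Balogh--Garcia--Li--Wagner. Their method, adapted in the proof of Theorem~\ref{thm:nearstar} for $3\le q<100\ln k$, works in the inclusion graph $H$ as follows.

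\begin{itemize}[leftmargin=2.0em]
\item Split $\mathcal A$ into $2$-linked components. Their neighbourhoods are disjoint, so the choices for $\mathcal B$ factor as $2^{|Y|-\sum_i g_i}$.
\item Pass to closures.
\item Count small components as connected sets in the square of $H$, using the strong expansion $g\ge d_Xa/(5e)^q$.
\item For large components, Sapozhenko's graph container lemma supplies $2^{o(gq/k)}$ approximating pairs $(Q,R)$ with $|Q|\le(1+q/k)|R|+o(gq/k)$. Together with the isoperimetric gain $g-a\ge7gq/k$, this gives at most $2^{(1-\epsilon)g}$ components with given $(a,g)$. The saving survives the summation over components.
\item The case $|[\mathcal A]|>T_0$ is handled by a crude entropy bound. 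It works only because there $|N(\mathcal A)|$ is a constant fraction of $|Y|$ while $|\mathcal A|\le\zeta|Y|$.
\end{itemize}
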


They also proposed the following conjecture.
\begin{conjecture}[Balogh, Garcia, Li, and Wagner~\cite{BaloghGarciaLiWagner2024}]\label{conj:BaloghGarciaLiWagner2024}
For $n\ge 2k+2$ and $k\to\infty$, we have
$$I(n,k,\geq 1)=o(I(n,k)).$$
\end{conjecture}

The two smaller cases exhibit different behavior.
When $n=2k$, two members of $\binom{[n]}k$ are disjoint if and only if they are complements, so an intersecting family contains at most one member from each complementary pair. 
When $n=2k+1$, Balogh, Garcia, Li, and Wagner~\cite{BaloghGarciaLiWagner2024} showed that trivial families do not form almost all intersecting families and proposed a different description of the typical structure. Thus $n=2k+2$ is the first value at which typical triviality can be expected.

In this article, our goal is to determine how far the lower bound on $n$ can be reduced by combining a global Kneser-graph container with their local inclusion-graph method. 
The following theorem states that their conjecture holds except for the boundary case $n=2k+2$.

\begin{theorem}\label{thm:main}
For $n\ge 2k+3$ and $k\to\infty$, the number of non-trivial intersecting families in $\binom{[n]}k$ is $o\left(2^{\binom{n-1}{k-1}}\right)$.
In particular,
$$I(n,k)=(n+o(1))2^{\binom{n-1}{k-1}} \quad \text{and} \quad I(n,k,\geq 1)=o\left(2^{\binom{n-1}{k-1}}\right).$$
Moreover, this result confirms Conjecture~\ref{conj:BaloghGarciaLiWagner2024} for $n\geq 2k+3$.
\end{theorem}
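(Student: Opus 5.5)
The plan is to bound the number of non-trivial intersecting families by $o\bigl(2^{\binom{n-1}{k-1}}\bigr)$. The other two assertions then follow routinely. Trivial families are exactly subfamilies of the $n$ full stars, and inclusion--exclusion shows that their number is $(n+o(1))2^{\binom{n-1}{k-1}}$: two stars share only $\binom{n-2}{k-2}$ sets, which is a vanishing fraction of $\binom{n-1}{k-1}$. Conjecture~\ref{conj:BaloghGarciaLiWagner2024} for $n\ge 2k+3$ is then immediate. I would split non-trivial intersecting families $\calF$ according to their distance from a star. Let $d(\calF)=\min_i |\calF\setminus \calS_i|$, where $\calS_i$ is the full star at $i$.

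\emph{Far from stars (global container step).} Apply the graph container lemma to the Kneser graph $KG(n,k)$, whose independent sets are exactly the intersecting families. The inputs are the eigenvalues $\lambda_{\min}=-\binom{n-k-1}{k-1}$ and degree $\binom{n-k}{k}$, together with the Hoffman bound and a supersaturation/stability statement. The stability statement should say that any family of size $\binom{n-1}{k-1}-m$ far from every star spans many disjoint pairs; this is a Hilton--Milner-type removal lemma. With these inputs I would obtain a collection of $2^{o(\binom{n-1}{k-1})}$ containers, each of which is either almost contained in a star or has size at most $(1-\varepsilon)\binom{n-1}{k-1}$. Families lying in small containers contribute $2^{(1-\varepsilon+o(1))\binom{n-1}{k-1}}$, which is negligible. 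For $n\ge 2k+3$ the spectral gap $\binom{n-k-1}{k-1}/\binom{n-1}{k-1}$ is only exponentially small in $k$ when $n-2k$ is constant. Therefore the container count must be made to beat this, and I would use the multiplicative-error version of the container lemma to control the fingerprint size relative to $\binom{n-1}{k-1}$.

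\emph{Close to a star (local inclusion-graph step).} Fix the star at $1$ and write $\calF=\calA\cup\calB$, where $\calB=\{F\in\calF:1\notin F\}$ is nonempty and small. Each $B\in\calB$ forbids all $A\ni 1$ with $A\cap B=\emptyset$, i.e.\ $\binom{n-k-1}{k-1}$ sets of the star. Following Balogh--Garcia--Li--Wagner, I would count by the size $s=|\calB|$. If $\calB$ forbids a set $N(\calB)$ of star members, the count is at most $\binom{\binom{n-1}{k}}{s}\,2^{\binom{n-1}{k-1}-|N(\calB)|}$. The key estimate is an expansion bound in the bipartite disjointness graph between $\binom{[2,n]}{k}$ and the star: $|N(\calB)|\ge$ something like $s\cdot\binom{n-k-1}{k-1}/\mathrm{poly}$ for small $s$, via Kruskal--Katona/Lovász shadow arguments. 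Larger $s$ up to the container threshold is handled by a Hoffman-type expansion. The entropy term satisfies $s\log\binom{n-1}{k}\approx s\,k\log n$, while the loss is $2^{-|N(\calB)|}$, and $\binom{n-k-1}{k-1}\ge \binom{k+2}{k-1}=\Theta(k^3)$ when $n\ge 2k+3$. Summing over $s\ge1$ then gives $o(1)\cdot 2^{\binom{n-1}{k-1}}$. This is exactly where $n=2k+2$ fails: there the value is $\binom{k+1}{2}$ and the margin is too thin.

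The main obstacle is the middle regime, where $s$ is too large for the crude per-set expansion, because neighbourhoods overlap heavily, yet too small for the container to certify that the family is far from the star. I would first try an isoperimetric inequality for the disjointness bipartite graph, in the form of a Hilton--Milner-style statement: $s$ sets avoiding $1$ kill at least $\min\{s\binom{n-k-1}{k-1}/2,\ \delta\binom{n-1}{k-1}\}$ star members. The container cutoff $\varepsilon$ would be chosen below $\delta$ so that the two regimes overlap.
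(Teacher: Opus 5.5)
\textbf{There are two genuine gaps: the global container step is mis-calibrated, and the middle regime of the local step is not handled (the inequality you propose for it is false).}

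\textbf{The global step.} No dichotomy of the form ``each container is almost contained in a star or has at most $(1-\varepsilon)\binom{n-1}{k-1}$ members'' can hold for a fixed $\varepsilon>0$. Write $q=n-2k$. The intersecting family $\{F:|F\cap[3]|\ge2\}$ has $\binom{n-1}{k-1}-\frac{q}{k-1}\binom{n-3}{k-2}=(1-\Theta(q/k))\binom{n-1}{k-1}$ members. It is also at distance at least $\binom{n-3}{k-2}\approx\frac14\binom{n-1}{k-1}$ from every star. So a container holding it is neither small nor near a star.

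The cutoff therefore has to be of order $\frac qk\binom{n-1}{k-1}$. That forces $\log|\mathfrak C|=o\bigl(\frac qk\binom{n-1}{k-1}\bigr)$, not merely $o\bigl(\binom{n-1}{k-1}\bigr)$, and it requires a stability theorem valid at deficiency $O(q/k)$. The paper obtains exactly this from three ingredients:
\begin{itemize}
\item a Kleitman--Winston pruning with degree threshold $L=\eta q\binom{k+q}{q}/k$;
\item Lov\'asz's eigenvalue bound, giving $|C|\le(1+9\eta q/k)\binom{n-1}{k-1}$ for the containers;
\item Das--Tran applied with $|\alpha|,\beta=O(q/k)$.
\end{itemize}
The ratio of the fingerprint entropy to $\frac qk\binom{n-1}{k-1}$ is then of order $\ln k/k^{q-2}$ for fixed $q$, and this is one place where $q\ge3$ is essential. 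By contrast, your small-$s$ count is not where $n=2k+2$ breaks. For $s\le\binom{n-k-1}{k-1}^3$ one has $|N(\calB)|\ge s\binom{n-k-1}{k-1}/(5e)^q$, which already beats $s\log\binom{n-1}{k}$ when $q=2$.

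\textbf{The middle regime.} This is the decisive gap. Let $\calB$ consist of all $k$-subsets of $[2,n]$ containing a fixed $t$-set $T$. The killed star members are exactly the sets $\{1\}\cup Y$ with $Y\cap T=\varnothing$. Hence $s=\binom{n-1-t}{k-t}$ and
$|N(\calB)|=\binom{n-1-t}{k-1}=s\prod_{i=1}^{t-1}\frac{k+q-i}{k-i}=(1+o(1))s$ for fixed $t$.
Moreover $|N(\calB)|\approx2^{-t}\binom{n-1}{k-1}$. So once $2^{-t}$ is well below $\min\{\delta,\varepsilon\}$, this $\calB$ lies in your near-star range and violates $|N(\calB)|\ge\min\{s\binom{n-k-1}{k-1}/2,\ \delta\binom{n-1}{k-1}\}$.

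Worse, $\binom{n-1}{k}/s\ge2^t$, so $\log\binom{\binom{n-1}{k}}{s}\ge ts$, which exceeds $|N(\calB)|$ for $t\ge2$. Hence no bound of the form $\binom{\binom{n-1}{k}}{s}2^{\binom{n-1}{k-1}-\min|N(\calB)|}$ can close this range, whatever correct isoperimetric input you feed it.

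The missing idea is to count the non-star part by its neighbourhood size rather than by its cardinality; this is the heart of the paper. Let $\calA$ be the set of complements in $[n]\setminus\{x\}$ of the non-star members. The paper splits on the closure $[\calA]$, not on $|\calA|$.
\begin{itemize}
\item If $|[\calA]|>T_0=\binom{n-10}{k+q-1}$, then $|N(\calA)|\ge\gamma\binom{n-1}{k-1}$ for a fixed constant $\gamma$. This beats the entropy $2H_2(\zeta)\binom{n-1}{k-1}$ of choosing $\calA$.
\item If $|[\calA]|\le T_0$, it decomposes $\calA$ into $2$-linked components, whose neighbourhoods are pairwise disjoint. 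Sapozhenko's graph containers give at most $2^{\theta g}$ good pairs $(Q,R)$ with $\theta=o(q/k)$. A reconstruction count then shows that at most $2^{g/2}$ (for $g\le d_X^3$) or $2^{g(1-1/(k\sqrt{d_X}))}$ (for $g>d_X^3$) components have neighbourhood size $g$, where $d_X=\binom{n-k-1}{k-1}$.
\end{itemize}
So each component costs less than the factor $2^{g}$ it removes from the star side, and the net savings survive the sum over all component profiles. Your outline has no substitute for this step, which is precisely the regime you flagged as the main obstacle.
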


Our proof proceeds in two stages. 
We first use a global container theorem for the Kneser graph to reduce the problem to intersecting families lying close to a full star, 
since all remaining containers already have a sufficient deficit in the exponent. 
After fixing such a star, we encode the members outside the star and the excluded members inside the star by an independent set in an appropriate inclusion graph. 
We then divide the argument according to the size of the closure of the non-star part. 
Families with large closure are controlled by a direct entropy estimate, 
while in the small closure case we decompose the closure into $2$-linked components and count each component through a “good pair and reconstruction” procedure. 
The main new ingredient is the reconstruction estimate, which yields a factor of the form $2^{-g/(k\sqrt{d_X})}$ for a component with neighborhood size $g$ and is strong enough to complete the summation over all components.

\section{Preliminaries}\label{sec:Preliminary}

\subsection{Notation}

Let $G=(V,E)$ be a simple graph. For $A\subseteq V$, the neighborhood of $A$ in $G$, denoted by $N_G(A)$, is the set of vertices adjacent to at least one vertex of $A$. 
If $u$ and $v$ lie in different components, we set $d_G(u,v)=\infty$.
For $S\subseteq V$, the neighborhood of $A$ restricted to $S$ is $N_{S,G}(A)=N_G(A)\cap S$. The distance $d_G(u,v)$ between two vertices $u,v\in V$ is the length of a shortest path from $u$ to $v$ in $G$. 
We write $e_G(A)$ for the number of edges in the subgraph induced by $A$. 
A set $I\subseteq V$ is \emph{independent} if no two vertices of $I$ are adjacent. 
When the underlying graph is clear, we write $N(A)$, $N_S(A)$, $d(u,v)$, and $e(A)$.

Let $\Sigma$ be a bipartite graph with classes $X$ and $Y$. For $A \subseteq X$, define the \emph{closure} of $A$ by $$
[A] := \{ v \in X : N(v) \subseteq N(A) \}
$$
and call $A$ \emph{closed} if $[A] = A$. 
We say that a pair of vertices $u$ and $v$ is $k$-linked in a set $A \subseteq X$ if there exists a sequence $u = v_1, v_2, \dots, v_{\ell-1}, v_\ell = v$ in $A$ such that for each $i \in [\ell-1]$, the distance $d_\Sigma(v_i, v_{i+1})$ is at most $k$. A set $A$ is $k$-linked if every pair of vertices in $A$ is $k$-linked in $A$. A $k$-linked component of a set $B \subseteq X$ is an inclusion-maximal $k$-linked subset of $B$.

Given integers $1 \le k \le n/2$, the Kneser graph $KG(n,k)$ is defined on the vertex set 
$V = \binom{[n]}{k}$, where two $k$-sets $F_1,F_2 \in \binom{[n]}{k}$ are adjacent if and only if $F_1 \cap F_2 = \varnothing$.

For $x\in[n]$, define the full star centered at $x$ by $\calS_x=\{F\in\tbinom{[n]}k:x\in F\}$.

For two sets $A$ and $B$, their symmetric difference is $A\mathbin\triangle B=(A\setminus B)\cup(B\setminus A)$. 
Throughout the paper, $\log$ denotes the logarithm to base $2$, while $\ln$ denotes the natural logarithm. 
For real $z,u\ge0$, we write $\binom{z}{\le u}=\sum_{j=0}^{\lfloor u\rfloor}\binom{\lceil z\rceil}{j}$.

\subsection{Tools}

Das and Tran~\cite{DT} proved a removal lemma that holds whenever the size of $\mathcal{F}$ is close to the size of a union of $\ell$ full stars and has relatively few disjoint pairs.
For earlier and related stability results for intersecting families, see Friedgut~\cite{Friedgut2008}, Keevash~\cite{Keevash2008}, and Ellis, Keller, and Lifshitz~\cite{EllisKellerLifshitz2019}.
The following convenient consequence is obtained from Theorem 1.2 of Das and Tran~\cite{DT} by taking $\ell=1$.

\begin{theorem}[Das and Tran~\cite{DT}]\label{thm:DT}
There is an absolute constant $C_{\rm DT}>1$ with the following property. 
Let $n>2k$, let $\alpha\in\mathbb R$ and $\beta\ge0$, and suppose that $\calF\subseteq\binom{[n]}k$ has size
\begin{equation*}
|\calF|=(1-\alpha)\binom{n-1}{k-1}
\end{equation*}
and contains at most $\beta\binom{n-1}{k-1}\binom{n-k-1}{k-1}$ disjoint pairs. If
\begin{equation*}
\max\{2|\alpha|,\beta\}\le\frac{n-2k}{(20C_{\rm DT})^2n},
\end{equation*}
then some star $\calS_x$ satisfies
\begin{equation*}
|\calF\mathbin\triangle\calS_x|\le C_{\rm DT}(|\alpha|+2\beta)\frac{n}{n-2k}\binom{n-1}{k-1}.
\end{equation*}
\end{theorem}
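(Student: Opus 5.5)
This statement is not proved in the paper; it is quoted as the case $\ell=1$ of Theorem~1.2 of Das and Tran~\cite{DT}. My plan is therefore a direct specialization rather than a new argument. I would first write out their Theorem~1.2 in its general form. As I recall it, it concerns a family $\calF\subseteq\binom{[n]}k$ with $|\calF|=(\ell-\alpha)\binom{n-1}{k-1}$ containing at most $\beta\ell\binom{n-1}{k-1}\binom{n-k-1}{k-1}$ disjoint pairs. Under the smallness condition
$$\max\{2|\alpha|,\beta\}\le\frac{n-2k}{(20C\ell)^2n},$$
it gives a union $\calS$ of $\ell$ full stars with $|\calF\mathbin\triangle\calS|\le C(|\alpha|+2\beta)\frac{n}{n-2k}\binom{n-1}{k-1}$.

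Substituting $\ell=1$ turns every hypothesis literally into the one displayed above. The size becomes $(1-\alpha)\binom{n-1}{k-1}$, the disjoint-pair bound becomes $\beta\binom{n-1}{k-1}\binom{n-k-1}{k-1}$, and the threshold becomes $(n-2k)/((20C)^2n)$. A union of one star is a single $\calS_x$, so the conclusion is exactly the claimed bound with $C_{\rm DT}=C$. The one point to check with care is bookkeeping. I need to confirm that Das and Tran's $\alpha$ may be negative, since here $\alpha\in\mathbb R$ and $|\alpha|$ is used, and that their constant does not depend on $\ell$ in a way that changes the form of the threshold. If their $\alpha$ is restricted to be nonnegative, the case $\alpha<0$ is harmless: $|\calF|$ then exceeds a full star, and since the disjoint-pair bound and the conclusion involve only $|\alpha|$, their argument runs unchanged. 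Enlarging $C_{\rm DT}$ to be above $1$ if necessary costs nothing, because both the hypothesis and the conclusion only become weaker.

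If a self-contained route were wanted, I would reprove the $\ell=1$ case spectrally. I would use the Kneser graph $KG(n,k)$, whose eigenvalues are $(-1)^j\binom{n-k-j}{k-j}$, with smallest ratio $-\frac{k}{n-k}$ relative to the degree $\binom{n-k}{k}$. The expander mixing (Hoffman) bound then shows that a set of near-star size with few edges has indicator vector of small mass, of order $(|\alpha|+2\beta)\frac{n}{n-2k}$, outside the span of the constant and degree-one eigenspaces. The spectral gap between the first and second nontrivial eigenvalues is proportional to $\frac{n-2k}{n}$, which explains the factor $\frac{n}{n-2k}$.

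The main obstacle in that route is the final step: passing from "close to degree one" to "close to a single star" (a Friedgut-type or Filmus-type stability for Boolean functions on the slice). The hypothesis $\max\{2|\alpha|,\beta\}\le\frac{n-2k}{(20C_{\rm DT})^2n}$ is exactly what keeps the error below the threshold at which that rounding step works. For the purposes of this paper, however, I would simply cite~\cite{DT} with $\ell=1$.
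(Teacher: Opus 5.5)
Your proposal is correct and matches the paper: the paper gives no proof of this statement and simply quotes Theorem~1.2 of Das and Tran with $\ell=1$, exactly as you propose. One small correction to your recalled general form, which does not affect the specialization: for $\ell\ge2$ the hypothesis must also allow the baseline of roughly $\binom{\ell}{2}\binom{n-1}{k-1}\binom{n-k-1}{k-1}$ disjoint pairs that the ratio bound forces on any family of size near $\ell\binom{n-1}{k-1}$, but this term vanishes at $\ell=1$.
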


The quantity $\min_{x\in[n]}|\mathcal{F}\setminus\mathcal{S}_x|$, which measures the one-sided distance from $\mathcal{F}$ to the nearest full star, is often called the \emph{diversity} of $\mathcal{F}$. Several extremal results on this parameter were obtained by Kupavskii~\cite{Kupavskii2018} and by Frankl and Wang~\cite{FranklWang2024}.

We next recall the connection between the Kneser graph and the Erd\H{o}s--Ko--Rado theorem.
Since edges of $KG(n,k)$ denote disjoint pairs in $\binom{[n]}{k}$, it follows that independent sets of $KG(n,k)$ correspond directly to intersecting families in $\binom{[n]}{k}$. 
Thus the Erd\H{o}s--Ko--Rado theorem, viewed from the perspective of the Kneser graph, shows that the independence number of 
$KG(n,k)$ is $\binom{n-1}{k-1}$ when $n \ge 2k$.

The following theorem gives the spectrum of the Kneser graph. 
This formulation is also recorded by Das and Tran~\cite[Section~2]{DT}. 
For a systematic treatment of the Kneser graph, the ratio bound, and the Johnson scheme, see Godsil and Meagher~\cite[Chapters~2 and~6]{GodsilMeagher2015}.

\begin{theorem}[Lov\'asz~\cite{LovaszShannon}]\label{thm:spectrum}
The eigenvalues of the adjacency matrix of $KG(n,k)$ are
\begin{equation*}
(-1)^j\binom{n-k-j}{k-j},\quad 0\le j\le k.
\end{equation*}
In particular, its largest eigenvalue is the common degree of the vertices 
in the regular graph $KG(n,k)$, which is $\binom{n-k}{k}$, and its least eigenvalue is $-\binom{n-k-1}{k-1}$. 
Moreover, every $W\subseteq V(KG(n,k))$ with $|W|\ge\binom{n-1}{k-1}$ satisfies
\begin{equation*}
2e(W)\ge\frac{\binom{n-k}{k}+\binom{n-k-1}{k-1}}{\binom nk}|W|\left(|W|-\binom{n-1}{k-1}\right).
\end{equation*}
\end{theorem}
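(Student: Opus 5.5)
The plan is to take the spectrum from the classical theory of the Johnson scheme and then derive the edge inequality by a Hoffman-type quadratic form argument.

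\textbf{The spectrum.} Write $A$ for the adjacency matrix of $KG(n,k)$ and $N=\binom nk$. For $0\le j\le k$ let $V_j\subseteq\mathbb R^{\binom{[n]}k}$ be the span of the functions $F\mapsto \mathbf 1[T\subseteq F]$ with $|T|\le j$. Let $U_j=V_j\cap V_{j-1}^{\perp}$. The spaces $U_0,\dots,U_k$ are the standard $S_n$-invariant decomposition, and they are the common eigenspaces of the Johnson scheme. Since $A$ commutes with the $S_n$-action, it acts as a scalar on each $U_j$. To find that scalar, I would evaluate $A$ on one explicit vector of $U_j$, for instance $\prod_{i=1}^j(\mathbf 1[a_i\in F]-\mathbf 1[b_i\in F])$ with distinct $a_i,b_i$, at a set $F$ containing $a_1,\dots,a_j$ and none of the $b_i$. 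Each factor contributes a sign once $F'$ is required to be disjoint from $F$. Counting the sets $F'$ that avoid $F$ and contain all the $b_i$ gives the value $(-1)^j\binom{n-k-j}{k-j}$. This is exactly Lov\'asz's computation, and I would cite it rather than redo it. In particular $j=0$ gives the degree $d=\binom{n-k}k$. The least eigenvalue is $-\lambda$ with $\lambda=\binom{n-k-1}{k-1}$, because $|\binom{n-k-j}{k-j}|$ is decreasing in $j$, so the $j=1$ term dominates all negative ones.

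\textbf{The edge inequality.} Decompose $\mathbf 1_W=\frac{|W|}{N}\mathbf 1+f$ with $f\perp\mathbf 1$. Then $\|f\|^2=|W|-|W|^2/N$. Since $\mathbf 1$ is a $d$-eigenvector and every eigenvalue on $\mathbf 1^\perp$ is at least $-\lambda$,
\[
2e(W)=\mathbf 1_W^{T}A\mathbf 1_W=\frac{d|W|^2}{N}+f^{T}Af\ge\frac{d|W|^2}{N}-\lambda\Bigl(|W|-\frac{|W|^2}{N}\Bigr)=\frac{d+\lambda}{N}|W|\Bigl(|W|-\frac{\lambda N}{d+\lambda}\Bigr).
\]
It remains to identify $\lambda N/(d+\lambda)$. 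Using $\binom{n-k}k=\frac{n-k}{k}\binom{n-k-1}{k-1}$ we get $d+\lambda=\frac nk\lambda$. Hence $\lambda N/(d+\lambda)=\frac kn\binom nk=\binom{n-1}{k-1}$, which is the claimed bound.

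\textbf{Remarks.} The hypothesis $|W|\ge\binom{n-1}{k-1}$ is not needed for the inequality itself; it only guarantees that the right-hand side is nonnegative, which is the form used later. The only nontrivial input is the eigenvalue computation, which is the main obstacle if done from scratch, namely verifying that the $U_j$ are eigenspaces. I would simply cite Lov\'asz and Godsil--Meagher for it, as the statement already does.
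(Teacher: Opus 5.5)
Your proposal is correct. The test vector gives the eigenvalue $(-1)^j\binom{n-k-j}{k-j}$, monotonicity in $j$ identifies $-\binom{n-k-1}{k-1}$ as the least eigenvalue, and the quadratic-form bound combined with $\binom{n-k}{k}+\binom{n-k-1}{k-1}=\frac nk\binom{n-k-1}{k-1}$ yields exactly the stated edge inequality (for every $W$, as you note). The paper gives no proof of its own and only cites Lov\'asz, Das--Tran, and Godsil--Meagher, so your argument is essentially the standard derivation behind that citation.
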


For the next result, let $n=2k+q$, fix $x\in[n]$, and let $\mathcal{H}$ be the bipartite inclusion graph with parts $X=\binom{[n]\setminus\{x\}}{k+q-1}$ and $Y=\binom{[n]\setminus\{x\}}{k-1}$, where $A\in X$ is adjacent to $B\in Y$ if and only if $B\subset A$. All neighborhoods in the next theorem are taken in $\mathcal{H}$.

Balogh, Garcia, Li, and Wagner~\cite{BaloghGarciaLiWagner2024} used isoperimetric inequalities on $\mathcal{H}$ to obtain the following theorem, 
which follows from direct applications of the Lov\'asz~\cite{Lovasz1979} form of the Kruskal--Katona theorem~\cite{Kruskal1963,Katona2009}.

\begin{theorem}[Balogh, Garcia, Li, and Wagner~\cite{BaloghGarciaLiWagner2024}]\label{thm:isoperimetry}
Let $1\le q\le2+2\sqrt{k\ln k}$ and let $n=2k+q$. 
Fix $x\in[n]$ and let $\calA\subseteq\binom{[n]\setminus\{x\}}{k+q-1}$. Then
\begin{enumerate}[label=(\roman*),leftmargin=2.0em]
\item if $|\calA|\le\binom{n-2-c}{k+q-1}$ for a positive integer $c$, then
\begin{equation*}
|N(\calA)|\ge|\calA|\prod_{j=0}^{q-1}\left(1+\frac{c}{k+q-c-1-j}\right);
\end{equation*}
\item if $|\calA|\le\binom{k+q-1}{k-1}^{3}$, then
\begin{equation*}
|N(\calA)|\ge\frac{\binom{k+q-1}{k-1}}{(5e)^q}|\calA|.
\end{equation*}
\end{enumerate}
\end{theorem}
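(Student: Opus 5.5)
Both parts follow from the Lov\'asz form of the Kruskal--Katona theorem applied to the family $\calA$. Throughout, $\calA$ is a family of $m_0$-subsets of the $(n-1)$-element set $[n]\setminus\{x\}$, where $m_0:=k+q-1$. In $\mathcal H$ the set $N(\calA)$ is exactly the $(k-1)$-shadow $\partial_{k-1}\calA$, that is, the shadow obtained by going down $q$ levels. Write $|\calA|=\binom{z}{m_0}$ with a real $z\ge m_0$. Iterating the Lov\'asz bound one level at a time gives
\begin{equation*}
|N(\calA)|\ge\binom{z}{k-1}=|\calA|\prod_{m=k}^{k+q-1}\frac{m}{z-m+1}.
\end{equation*}
Each factor is decreasing in $z$. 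So in both parts it suffices to find an upper bound on $z$ from the hypothesis on $|\calA|$ and then estimate the product at that value.

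\textbf{Part (i).} The hypothesis $|\calA|\le\binom{n-2-c}{m_0}$ gives $z\le n-2-c$, so $z-m+1\le n-1-c-m$. The numerators run over $m=k,\dots,k+q-1$. The denominators $n-1-c-m$ run over $k-c,\dots,k+q-1-c$. Pairing the numerator $k+i$ with the denominator $k-c+i$ for $0\le i\le q-1$, the product becomes
\begin{equation*}
\prod_{i=0}^{q-1}\frac{k+i}{k-c+i}=\prod_{i=0}^{q-1}\Bigl(1+\frac{c}{k-c+i}\Bigr).
\end{equation*}
Substituting $i=q-1-j$ gives exactly the claimed factor $\prod_{j=0}^{q-1}\bigl(1+\frac{c}{k+q-c-1-j}\bigr)$. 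We need $c<k$ for the denominators to be positive; otherwise the hypothesis is vacuous or the statement is trivial.

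\textbf{Part (ii).} Bound the denominators crudely by $z-m+1\le z-k+1$. Then the product is at least
\begin{equation*}
\frac{(k+q-1)!/(k-1)!}{(z-k+1)^q}=\binom{k+q-1}{k-1}\frac{q!}{(z-k+1)^q}\ge\binom{k+q-1}{k-1}\Bigl(\frac{q}{e(z-k+1)}\Bigr)^q.
\end{equation*}
It therefore suffices to show that $z-k+1\le 5q$, i.e.\ $z\le k+5q-1$.

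Suppose instead that $z>k+5q-1$, so $s:=z-m_0>4q$. Then
\begin{equation*}
\binom{z}{m_0}=\binom{z}{s}\ge\Bigl(\frac{z}{s}\Bigr)^s\ge\Bigl(\frac{k+5q}{4q}\Bigr)^{4q}.
\end{equation*}
Here we use that $(z/s)^s$ is increasing in $s$ when the complementary index is fixed, so the bound at $s=4q$ applies. On the other side,
\begin{equation*}
\binom{k+q-1}{k-1}^3=\binom{k+q-1}{q}^3\le\Bigl(\frac{e(k+q)}{q}\Bigr)^{3q}.
\end{equation*}
Comparing these, the first exceeds the second as soon as $(k/(4q))^{4}>(2ek/q)^{3}$, i.e.\ as soon as $k/q>4^4(2e)^3$. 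This contradicts the hypothesis on $|\calA|$.

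\textbf{Main obstacle.} The only delicate point is the numerical comparison in part (ii). Because $q\le 2+2\sqrt{k\ln k}$, we have $k/q\to\infty$, so the inequality $k/q>4^4(2e)^3$ holds for all sufficiently large $k$. This is consistent with the paper's standing regime $k\to\infty$. If a statement valid for all $k$ were wanted, the constant $5e$ would need a more careful estimate, for instance keeping the exact product $\prod_m(z-m+1)$ rather than bounding every factor by $z-k+1$. I would first try the crude version above and only refine it if small $k$ must be covered.
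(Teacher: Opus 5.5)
Your proof is correct and follows the route the paper indicates. The paper gives no proof of its own; it cites Balogh--Garcia--Li--Wagner and notes that both parts are direct applications of the Lov\'asz form of Kruskal--Katona, which is exactly what you do: iterate to get $|N(\calA)|\ge\binom{z}{k-1}$, then bound $z$ using the size hypothesis. Two minor fixes are needed in (ii): the step $\binom{z}{s}\ge(z/s)^s$ is not justified (and can fail) for non-integer $s$, so instead use that $\binom{z}{m_0}$ is increasing in $z$ to reduce to the integer point $z=k+5q-1$, which gives $\binom{k+5q-1}{4q}\ge\bigl(\frac{k+5q-1}{4q}\bigr)^{4q}$ (base $k+5q-1$, not $k+5q$); and, as you note, your argument proves (ii) only for sufficiently large $k$, which is all the paper uses.
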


A bipartite graph $H$ with bipartition $X\cup Y$ is called $(d_X,d_Y)$-\emph{biregular} if every vertex in $X$ has degree $d_X$ and every vertex in $Y$ has degree $d_Y$.
For integers $a,g\ge1$, let
\begin{equation}\label{eq:container-class}
\calG(a,g)=\{\calA\subseteq X:\calA\text{ is a 2-linked set},\ |[\calA]|=a,\ |N(\calA)|=g\},
\end{equation}
where the neighborhood, closure, and $2$-linked property are taken with respect to $H$.

The following theorem is a biregular form of Sapozhenko's graph container method~\cite{Sapozhenko1987}. We use the formulation of Balogh, Garcia, Li, and Wagner~\cite[Theorem~5.1]{BaloghGarciaLiWagner2024}, restricted to the parts needed here.

\begin{theorem}[Balogh, Garcia, Li, and Wagner~\cite{BaloghGarciaLiWagner2024}]\label{thm:biregular-container}
Suppose $H$ is a $(d_X,d_Y)$-biregular graph with bipartition $X\cup Y$. 
Let $t=gd_Y-ad_X$ and, for $1\le\varphi\le d_Y-1$, let
\begin{equation*}
m_\varphi=\min\{|N(K)|:y\in Y,\ K\subseteq N(y),\ |K|>\varphi\}.
\end{equation*}
Let $1\le\psi\le\min\{d_X,d_Y\}-1$ and let $C>0$ satisfy $C\ln d_X/(\varphi d_X)<1$. 
Then there is a family $\calW(a,g)\subseteq2^X\times2^Y$ with
\begin{align*}
|\calW(a,g)|\le{}&|Y|\exp\!\left(\frac{54Cg\ln d_X\ln(d_Xd_Y)}{\varphi d_X}+\frac{54g\ln(d_Xd_Y)}{d_X^{Cm_\varphi/(\varphi d_X)}}+\frac{54t\ln d_Y\ln(d_Xd_Y)}{d_X(d_Y-\varphi)}\right)\notag\\
&\times\binom{3Cgd_Y\ln d_X/(\varphi d_X)}{\le3Ct\ln d_X/(\varphi d_X)}\binom{gd_Y}{\le t/((d_Y-\varphi)\psi)}\binom{gd_Xd_Y}{\le t/((d_X-\psi)\psi)}.
\end{align*}
Moreover, every $\calA\in\calG(a,g)$ is assigned a pair $(Q,R)\in\calW(a,g)$ such that
\begin{equation*}
[\calA]\subseteq Q,\quad R\subseteq N(\calA),\quad \text{and} \quad |Q|\le\frac{d_Y}{d_X}|R|+\frac{\psi t}{d_X}\left(\frac1{d_X-\psi}+\frac1{d_Y-\psi}\right).
\end{equation*}
\end{theorem}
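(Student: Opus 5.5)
This theorem is the biregular form of Sapozhenko's graph container lemma, and I would prove it in the standard two stages: a $\varphi$-approximation of $N(\calA)$, followed by a $\psi$-approximation of the pair $([\calA],N(\calA))$. Throughout, fix $\calA\in\calG(a,g)$ and write $G=N(\calA)$. The identity that drives every count is double counting of the edges between $[\calA]$ and $G$. Each $v\in[\calA]$ sends all $d_X$ of its edges into $G$, while $G$ has $gd_Y$ edge-endpoints in total. Hence exactly $t=gd_Y-ad_X$ edges leave $G$ towards $X\setminus[\calA]$. Every "defect" below is charged against this excess $t$.

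\emph{Stage 1 ($\varphi$-approximation).} Call $F\subseteq Y$ a $\varphi$-approximation of $G$ if two conditions hold. First, every $v\in X$ with $N(v)\subseteq F$ has almost all its neighbours in $G$. Second, every $y\in G$ with more than $\varphi$ neighbours in $[\calA]$ lies in $F$. To build $F$, take a random subset $T\subseteq G$ in which each vertex is kept independently with probability $p=C\ln d_X/(\varphi d_X)$. Then $|T|\approx pg$, which gives the first term in the exponent.

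Because $\calA$ is $2$-linked, $T$ can be chosen so that it becomes connected in the graph on $Y$ whose edges join vertices at $\Sigma$-distance at most $4$; this may require adding a few linking vertices. That auxiliary graph has maximum degree at most $(d_Xd_Y)^2$. The number of connected sets of size $s$ containing a given root is at most $(e(d_Xd_Y)^2)^s$, and summing over the root gives the factor $|Y|\exp(O(|T|\ln(d_Xd_Y)))$.

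Now consider a vertex $y\in G$ with more than $\varphi$ neighbours in $\calA$, and let $K\subseteq N(y)\cap\calA$ be those neighbours. Then $N(K)\subseteq G$ and $|N(K)|\ge m_\varphi$. So $y$ fails to be "seen" through $T$ with probability at most $(1-p)^{m_\varphi}\le d_X^{-Cm_\varphi/(\varphi d_X)}$. Adding all such unseen vertices explicitly produces the second exponential term. Finally, I remove from $N(N_X(T))$ the vertices that lie outside $G$. Every such removed vertex carries at least $d_Y-\varphi$ of the $t$ excess edges, so there are at most $t/(d_Y-\varphi)$ of them. Recording them costs the third exponential term and the binomial $\binom{3Cgd_Y\ln d_X/(\varphi d_X)}{\le 3Ct\ln d_X/(\varphi d_X)}$.

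\emph{Stage 2 ($\psi$-approximation).} Starting from $F$, set $S=\{v\in X:|N(v)\cap F|\ge d_X-\psi\}$. I then output
\[
Q=S\cup\{\text{a few vertices added explicitly}\},\qquad R=\{y\in F:|N(y)\cap Q|\ge d_Y-\psi\}\cap G .
\]
The corrections fall into two classes. The first class consists of vertices of $G$ that are misclassified because they have many edges leaving $[\calA]$. Each of them uses up at least $(d_Y-\varphi)\psi$ of the excess, so they are encoded by a subset of size at most $t/((d_Y-\varphi)\psi)$ inside a ground set of size $gd_Y$. The second class consists of vertices of $X$ that have at least $\psi$ neighbours outside $F$ but still lie in $[\calA]$. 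Each of them uses up at least $(d_X-\psi)\psi$ of the excess, so they are encoded by a subset of size at most $t/((d_X-\psi)\psi)$ inside a ground set of size $gd_Xd_Y$ (vertices within distance $2$ of $F$). These encodings give the last two binomials. They also guarantee $[\calA]\subseteq Q$ and $R\subseteq G$.

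The inequality $|Q|\le \frac{d_Y}{d_X}|R|+\frac{\psi t}{d_X}\bigl(\frac1{d_X-\psi}+\frac1{d_Y-\psi}\bigr)$ then follows by double counting the edges between $Q$ and $R$. Each vertex of $Q$ sends at least $d_X-\psi$ edges into $R$, except for an explicitly bounded exceptional set. Each vertex of $R$ receives at most $d_Y$ edges. The exceptional vertices contribute the additive error term.

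The main obstacle is Stage 1. The random choice has to deliver three things at once, each with positive probability:
\begin{itemize}
\item the size bound on $T$;
\item the $2$-linked, hence connected, structure of $T$ that makes the $|Y|\exp(\cdot)$ count valid;
\item the $m_\varphi$-based coverage estimate.
\end{itemize}
I would handle this by Markov's inequality applied separately to $|T|$ and to the number of uncovered high-degree vertices, and then take a union bound. The bookkeeping of exactly which vertices are charged to $t$ in Stage 2 is routine but delicate, and I would follow Sapozhenko's original accounting for it.
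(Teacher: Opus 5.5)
The paper gives no proof of this statement. It quotes Theorem~5.1 of Balogh--Garcia--Li--Wagner, which is proved by the Sapozhenko--Galvin scheme you outline. Your architecture and ingredients are right: a sparse random $T\subseteq G=N(\calA)$ with $p=C\ln d_X/(\varphi d_X)$, the $m_\varphi$ coverage bound, and a rooted count of linked sets. \textbf{However, the accounting that is supposed to produce the individual factors of the bound breaks down at load-bearing points.}

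\textbf{Stage~1.} The key fact is that the $t$ excess edges are exactly the edges between $G$ and $X\setminus[\calA]$, so vertices of $Y\setminus G$ and vertices of $[\calA]$ meet none of them.
\begin{itemize}
\item Your claim that every vertex of $N(N_X(T))\setminus G$ carries at least $d_Y-\varphi$ excess edges is therefore false. Recording these $Y$-vertices can cost up to $d_X$ per vertex of $N_X(T)\setminus[\calA]$, a factor $d_X$ over budget.
\item What must be recorded instead is $D=N_X(T)\setminus[\calA]$, as a subset of $N_X(T)$ (at most $d_Y|T|$ vertices). Each vertex of $D$ is joined to $T$ by an excess edge, so $|D|$ has expectation at most $pt$. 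After a Markov bound this is exactly the binomial $\binom{3Cgd_Y\ln d_X/(\varphi d_X)}{\le3Ct\ln d_X/(\varphi d_X)}$.
\item Then $F_0=N(N_X(T)\setminus D)\subseteq G$ is determined, ``seen'' should mean $y\in F_0$, and the $m_\varphi$ estimate should use $K=N(y)\cap[\calA]$.
\item The third exponential term pays for a step you never carry out. Let $G^\varphi$ be the set of vertices of $G$ with more than $\varphi$ neighbours in $[\calA]$, so $|G\setminus G^\varphi|\le t/(d_Y-\varphi)$. The vertices of $[\calA]$ all of whose neighbours lie in $G\setminus G^\varphi$ must be covered. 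This is done by a random subset of $G\setminus G^\varphi$ of density about $\ln d_Y/d_X$, plus explicit repairs, so that $[\calA]\subseteq N(F')$.
\item This covering property is what makes the fingerprint linked: every vertex of the $2$-linked set $G$ is then within distance $4$ of it. Unquantified ``linking vertices'' do not do this job, and Stage~2 also needs the covering property.
\end{itemize}

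\textbf{Stage~2.} Both of your charging claims fail. Vertices of $[\calA]$ have no excess edges. A single vertex of $G$ has only $d_Y$ edges, which in the relevant regime is far fewer than $(d_Y-\varphi)\psi$. Adding bad vertices of $[\calA]$ to $Q$ one at a time only gives a count of order $\varphi t/((d_Y-\varphi)\psi)$, a factor $\varphi$ too many. The factor $1/\psi$ comes from two greedy phases.
\begin{itemize}
\item First phase: while some $u\in[\calA]$ has more than $\psi$ neighbours in $G\setminus F$, record $u\in N(F')$ (at most $gd_Y$ candidates) and add $N(u)\subseteq G$ to $F$. Each step absorbs more than $\psi$ vertices of $G\setminus G^\varphi$.
\item Second phase: put $S=\{u:d_F(u)\ge d_X-\psi\}\supseteq[\calA]$. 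While some $v\in Y\setminus G$ has more than $\psi$ neighbours in $S$, record $v\in N(N(F))$ (at most $gd_Xd_Y$ candidates) and delete $N(v)$ from $S$. Each step removes more than $\psi$ of the at most $t/(d_X-\psi)$ vertices of $S\setminus[\calA]$.
\item Then take $Q=S$ and $R=F\cup\{v:d_S(v)>\psi\}$; this $R$ lies in $G$ automatically. Your $R$ is instead cut down by intersecting with $G$, which the fingerprint does not determine.
\end{itemize}

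\textbf{Final inequality.} Your double count yields only $(d_X-\psi)|Q|\le d_Y|R|$. That is an additive error of order $\psi d_Yg/d_X^2$, not the stated term, which is much smaller when $t\ll gd_Y$. The stated inequality comes from
$d_X|Q|=e(Q,R)+e(Q,Y\setminus R)\le d_Y|R|+\psi\bigl(|Q\setminus[\calA]|+|G\setminus R|\bigr)$.
Combine this with $|Q\setminus[\calA]|\le t/(d_X-\psi)$ and $|G\setminus R|\le t/(d_Y-\psi)$; each of these uses one of the two degree properties of $(Q,R)$.
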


We shall use the following rounded form of the standard binomial-tail estimate. 

\begin{lemma}\label{lem:rounded-binomial-tail}
If $z\geq u\geq2$, then $\binom{z}{\leq u}
\leq
\left(3ez/u\right)^u$.
\end{lemma}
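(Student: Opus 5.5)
We first reduce to the case where $u$ is an integer: since $\binom{z}{\le u}$ is defined as $\sum_{j=0}^{\lfloor u\rfloor}\binom{\lceil z\rceil}{j}$, it equals $\binom{\lceil z\rceil}{\le \lfloor u\rfloor}$. We then compare against the right-hand side $(3ez/u)^u$, using that it is increasing in $u$ on the relevant range. Writing $f(u)=u\ln(3ez/u)$, we have $f'(u)=\ln(3ez/u)-1=\ln(3z/u)>0$ for $u\le z$. Hence it suffices to bound $\binom{m}{\le r}$ with $m=\lceil z\rceil$ and $r=\lfloor u\rfloor\ge 2$ by $(3ez/r)^{r}$, since $(3ez/r)^r\le(3ez/u)^u$.

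For this we invoke the standard estimate $\binom{m}{\le r}\le (em/r)^r$, valid for $1\le r\le m$. The usual proof runs as follows. For $0<\lambda=r/m\le1$, we have $\sum_{j\le r}\binom{m}{j}\lambda^{r}\le\sum_{j\le r}\binom mj\lambda^j\le(1+\lambda)^m\le e^{r}$, so $\binom{m}{\le r}\le e^r(m/r)^r$. This requires $r\le m$, which holds because $r=\lfloor u\rfloor\le\lfloor z\rfloor\le m$.

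It remains to absorb the rounding $m=\lceil z\rceil$ versus $z$. Since $z\ge 2$, we have $\lceil z\rceil\le z+1\le \tfrac32 z\le 3z$. Therefore $(em/r)^r\le(3ez/r)^r$, and the claim follows.

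There is no real obstacle here. The only point requiring care is the monotonicity step used to pass from $r=\lfloor u\rfloor$ back to $u$. That step needs $u\le 3z$, which holds with plenty of room, and this slack is exactly why the generous constant $3$ appears in the bound.
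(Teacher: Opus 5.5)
Your proof is correct and is essentially the paper's argument: you reduce to $\binom{\lceil z\rceil}{\le\lfloor u\rfloor}$, apply the standard bound $(eN/r)^r$, and absorb the rounding via $\lceil z\rceil\le z+1\le\tfrac32 z$. The one difference is that you pass from $r=\lfloor u\rfloor$ back to $u$ using monotonicity of $t\mapsto t\ln(3ez/t)$, whereas the paper uses $r\ge u/2$ together with $3ez/u>1$; since the monotonicity step only needs $u\le z$, your route actually gives the sharper constant $3/2$, so your closing remark misattributes the $3$, which in the paper comes from combining $N\le\tfrac32 z$ with $r\ge u/2$.
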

\begin{proof}
We let $N=\lceil z\rceil$ and $r=\lfloor u\rfloor$. Since $1\leq r\leq N$, the standard binomial-tail estimate gives
\begin{equation*}
\sum_{j=0}^{r}\binom Nj
\leq
\left(\frac{eN}{r}\right)^r.
\end{equation*}
Since $z\geq u\geq2$, we have $N\leq z+1\leq3z/2$ and $r\geq u-1\geq u/2$. Hence $N/r\leq3z/u$. Moreover, $r\leq u$ and $3ez/u>1$. Therefore
\begin{equation*}
\binom{z}{\leq u}
\leq
\left(\frac{eN}{r}\right)^r
\leq
\left(\frac{3ez}{u}\right)^r
\leq
\left(\frac{3ez}{u}\right)^u,
\end{equation*}
as required.
\end{proof}

\begin{lemma}\label{lem:connected}
Let $J$ be a graph on $n$ vertices with maximum degree at most $\Delta\ge2$. 
For every integer $a\ge1$, the number of connected $a$-vertex subsets of $J$ is at most $n(4\Delta)^{a-1}$.
\end{lemma}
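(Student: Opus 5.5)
The plan is the standard encoding argument for lattice animals. Fix a starting vertex $v_0$ of the connected $a$-vertex set $S$; there are $n$ choices. Then associate to $S$ a closed walk in a spanning tree of $J[S]$ that records $S$ compactly, and count those walks.

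\emph{Step 1: spanning tree and its doubled walk.} Fix, for each connected $S$ with $|S|=a$ and each $v_0\in S$, a spanning tree $T$ of $J[S]$. A depth-first traversal of $T$ starting at $v_0$ traverses every edge of $T$ exactly twice. It therefore yields a closed walk $v_0=w_0,w_1,\dots,w_{2(a-1)}=v_0$ in $J$ of length $2(a-1)$ whose vertex set is exactly $S$. The map $S\mapsto(v_0,\text{walk})$ is injective, because $S$ is recovered as the set of visited vertices.

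\emph{Step 2: crude count.} Each step of the walk moves to one of at most $\Delta$ neighbours of the current vertex. The number of such walks from a fixed $v_0$ is thus at most $\Delta^{2(a-1)}$, which gives $n\Delta^{2(a-1)}$. This is weaker than the claimed $n(4\Delta)^{a-1}$ when $\Delta>4$, so the encoding must be refined.

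\emph{Step 3: refined encoding.} Encode the DFS as a sequence of $2(a-1)$ moves, each either ``forward'' or ``backtrack''. A backtrack step is determined uniquely, since it returns to the parent. A forward step requires choosing one of at most $\Delta$ neighbours. There are exactly $a-1$ forward steps, so the neighbour choices contribute at most $\Delta^{a-1}$. The forward/backtrack pattern is a binary string of length $2(a-1)$, contributing at most $2^{2(a-1)}=4^{a-1}$; one could use Catalan numbers, but this crude bound suffices. The total is $n\cdot 4^{a-1}\Delta^{a-1}=n(4\Delta)^{a-1}$.

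\emph{Main obstacle.} The only point needing care is that the walk, together with its pattern and forward choices, determines $S$, and that backtracking is forced. Both follow since a DFS stack determines each parent. The case $a=1$ gives $n$ trivially. The hypothesis $\Delta\ge2$ is not needed for this argument and only ensures the bound is stated cleanly.
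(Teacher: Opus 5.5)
Your proof is correct and is essentially the paper's argument. The forward/backtrack string of your depth-first traversal is the standard encoding of the rooted plane tree that the paper bounds by $4^{a-1}$, and your $\Delta$ choices per forward step are the paper's choices for the image of each non-root vertex along the edge to its parent.
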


\begin{proof}
The proof uses the standard rooted-tree encoding. 
We include the details for completeness. 
Choose the image of the root in $n$ ways. Every connected $a$-vertex set has a rooted spanning tree. 
Ordering the children at each vertex produces a rooted plane tree, that is, a rooted tree in which the children of each vertex are ordered, and there are at most $4^{a-1}$ rooted plane trees on $a$ vertices. 
Once the root image and the plane tree are fixed, every other vertex has at most $\Delta$ possible images along the edge to its parent. 
Counting non-injective maps only enlarges the total, so the number of connected $a$-vertex sets is at most $n4^{a-1}\Delta^{a-1}=n(4\Delta)^{a-1}$.
\end{proof}

\section{Reduction to fixed stars}\label{sec:global}
Throughout the rest of the paper, $n=2k+q$ with $3\le q<100\ln k$, and all statements are uniform in this range as $k \to \infty$.
For sufficiently large $k$, this range lies inside $q\le2+2\sqrt{k\ln k}$, and also $n<3k$ and $k+q<2k$.

Let $\gamma=1/(2\cdot4^9)$.
Let $H_2(p)=-p\log p-(1-p)\log(1-p)$ be the binary entropy function. 
Choose a fixed $\zeta>0$ so small that
\begin{equation*}\label{eq:zeta}
2H_2(\zeta)<\frac\gamma2.
\end{equation*}
We also use the constants $\gamma$ and $\zeta$ throughout the rest of the paper.

The deterministic pruning argument below is in the spirit of the Kleitman--Winston method~\cite{KleitmanWinston1982}.

\begin{lemma}\label{lem:globalcontainer}
Fix $\eta>0$ and set $L=\eta q\binom{k+q}{q}/k$.
For a set $U\subseteq V(KG(n,k))$, let $e(U)$ denote the number of edges
in the subgraph of $KG(n,k)$ induced by $U$. For all sufficiently large
$k$, there is a family $\mathfrak C$ of subsets of $V(KG(n,k))$ with the
following properties:
\begin{enumerate}
\item every independent set in $KG(n,k)$ is contained in some $C\in\mathfrak C$;
\item every $C\in\mathfrak C$ satisfies $e(C)\leq \frac{L }{2}\binom {n}{k}$;
\item we have $$\log|\mathfrak C|\leq \frac{\binom {n}{k}}{L}\log(eL)=o\!\left(\frac{q}{k}\binom{n-1}{k-1}\right).$$
\end{enumerate}
\end{lemma}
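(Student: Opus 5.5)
We construct the containers by the deterministic Kleitman--Winston pruning (greedy max-degree) algorithm on $G=KG(n,k)$, which has $N=\binom nk$ vertices and is $D$-regular with $D=\binom{n-k}{k}$. Fix an independent set $I$, and run the following process on a current vertex set $U$, starting from $U=V(G)$. While $e(U)>\frac L2 N$, order the vertices of $U$ by degree in $G[U]$, breaking ties by a fixed global order, and let $v$ be the first vertex. Since the average degree of $G[U]$ exceeds $L|U|/|U|\cdot(N/|U|)\ge L$, the maximum degree $\deg_{G[U]}(v)$ is at least $L$. If $v\in I$, record $v$ and replace $U$ by $U\setminus(\{v\}\cup N(v))$. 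If $v\notin I$, replace $U$ by $U\setminus\{v\}$. When $e(U)\le\frac L2N$, stop and output $C=U$.

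Two invariants drive the argument. First, $I\subseteq U$ always holds: a recorded vertex lies in $I$ and its neighbours do not, while discarded unrecorded vertices are not in $I$. Second, the whole run is determined by the sequence of recorded vertices $S\subseteq I$. This holds because the choice of $v$ depends only on $U$, and whether $v\in I$ can be read off from whether $v$ is the next element of $S$. Hence $C$ is a function of $S$ alone, and properties 1 and 2 follow immediately.

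Each recorded vertex removes at least $L+1$ vertices from $U$, so $|S|\le N/(L+1)\le N/L$. The number of possible sets $S$ is therefore at most $\binom{N}{\le N/L}\le (eL)^{N/L}$, using $\sum_{j\le m}\binom Nj\le(eN/m)^m$ with $m=\lfloor N/L\rfloor$. This needs $L\ge1$ and $N/L\ge1$, both of which are easy to check here. This gives $\log|\mathfrak C|\le \frac NL\log(eL)$.

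It remains to prove the asymptotic claim, which is the only real computation. Since $N=\binom nk=\frac nk\binom{n-1}{k-1}$, we get
$$\frac NL\log(eL)=\frac{n}{\eta q}\binom{k+q}{q}^{-1}\log(eL)\binom{n-1}{k-1}.$$
Dividing by $\frac qk\binom{n-1}{k-1}$, we must show that
$$\frac{nk\log(eL)}{\eta q^2\binom{k+q}{q}}\to0.$$
For $q\ge3$ we have $\binom{k+q}{q}\ge\binom{k+3}{3}\ge k^3/6$ and $n<3k$, so the numerator is $O(k^2\log(eL))$. For the logarithm, $\log L\le\log(q\binom{k+q}q)\le q\log(2k)+O(\log k)=O(\ln^2k)$ because $q<100\ln k$. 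The ratio is therefore
$$O\!\left(\frac{k^2\ln^2 k}{\eta q^2\cdot k^3}\right)=O(\ln^2k/k)\to0.$$
The estimate is uniform in $q$, and indeed the bound improves as $q$ grows, since $\binom{k+q}{q}$ grows far faster than $\log L$. This is exactly where the hypothesis $q\ge3$ is used: with $q=2$ the factor $\binom{k+2}{2}\sim k^2/2$ would only cancel the $k^2$ in the numerator, leaving the growing factor $\log L$.
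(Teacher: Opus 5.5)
Your construction is the same deterministic Kleitman--Winston pruning the paper uses, but as written it fails property 1. When the chosen vertex $v$ lies in $I$, you delete $v$ itself along with $N(v)$ from $U$. So the invariant ``$I\subseteq U$ always holds'' is false: every recorded vertex is in $I$ but not in the final $U$, and the output $C=U$ misses part of $I$ whenever $S\neq\varnothing$. Your justification covers only the neighbours of $v$, not $v$ itself.

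The correct invariant is $I\subseteq S\cup U$, and the container must be $C=S\cup U(S)$. This is still a function of $S$ alone, so your bound on $|\mathfrak C|$ is unchanged. However, property 2 then needs an observation you have not made: $e(S\cup U)=e(U)$. This holds because $S\subseteq I$ is independent, and when each $s\in S$ was recorded all its neighbours still present were deleted. Since $U$ only shrinks afterwards, no edge joins $S$ to the final $U$. This is exactly how the paper argues, with containers $P\cup A(P)$. Alternatively, you could delete only $N_{G[U]}(v)$ and keep $v$. Then $v$ becomes isolated in $G[U]$ and is never chosen again, since every chosen vertex has degree greater than $L$. With that change $I\subseteq U$ really is invariant and $C=U$ works.

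With that repair, the rest of your argument is correct and differs from the paper only cosmetically.
\begin{itemize}
\item You stop when $e(U)\le\frac L2\binom nk$ rather than when the maximum degree of $G[U]$ drops below $L$. Both rules give property 2 and $|S|\le\binom nk/L$.
\item For the replay, phrase it as ``the chosen $v$ is in $I$ if and only if $v\in S$'' rather than ``$v$ is the next element of $S$'', since you are counting unordered sets $S$.
\item You bound $\sum_{j\le N/L}\binom Nj$ by $(eN/m)^m$ with $m=\lfloor N/L\rfloor$. Passing to $(eL)^{N/L}$ implicitly uses that $x\mapsto(eN/x)^x$ is increasing on $(0,N]$. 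The paper instead uses the entropy bound $2^{NH_2(1/L)}$.
\item Your estimate $\log(eL)=O(\ln^2 k)$ gives a ratio of $O(\ln^2 k/k)$, whereas the paper's $\log(eL)=O(q\log k)$ gives $O(\log k/k)$. Either is $o(1)$ uniformly for $3\le q<100\ln k$.
\end{itemize}
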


\begin{proof}
For convenience, we write $$G=KG(n,k)\quad\text{and}\quad V=V(G).$$
Fix a total ordering $\prec$ of the vertices of $G$. This
ordering will be used to make every choice in the construction unique.

Let $I\subseteq V$ be an independent set. We construct two sequences of sets $$A_0,A_1,\ldots\quad\text{and}\quad P_0,P_1,\ldots.$$
The set $A_i$ is the set of vertices that are still active after $i$ steps, while $P_i$ is the fingerprint recorded during the first $i$ steps.
Start with $A_0=V$ and $P_0=\varnothing$.
Suppose that $A_i$ and $P_i$ have already been defined. 
If the maximum degree of the induced graph $G[A_i]$ is less than $L$, then we stop. 
Otherwise, let $v_i$ be the first vertex, with respect to $\prec$, among all vertices of maximum degree in $G[A_i]$.

We now update $A_i$ and $P_i$ as follows.
If $v_i\in I$, put $P_{i+1}=P_i\cup\{v_i\}$ and $A_{i+1}=A_i\setminus\bigl(\{v_i\}\cup N_{G[A_i]}(v_i)\bigr)$.
If $v_i\notin I$, put $P_{i+1}=P_i$ and $A_{i+1}=A_i\setminus\{v_i\}$.
At least one vertex is removed at each step, so the process terminates. Denote the terminal sets by $P(I)$ and $A(I)$.
For every $i$, we have $P_i\subseteq I$ and $I\subseteq P_i\cup A_i$. Indeed, if $v_i\notin I$, only a vertex outside $I$ is removed, while if $v_i\in I$, then $v_i$ is added to $P_{i+1}$ and none of its neighbors belongs to $I$, since $I$ is independent. Hence $I\subseteq P(I)\cup A(I)$.
The set $P(I)$ is independent, and whenever a vertex enters $P(I)$, all its neighbors still present are deleted. 
Hence no edge lies inside $P(I)$ or between $P(I)$ and the set $A(I)$, so 
$$ e\bigl(P(I)\cup A(I)\bigr)=e(A(I))=\frac{1}{2} \sum_{v\in A(I)}d_{G[A(I)]}(v)<\frac{ L|A(I)|}{2}\leq \frac{ L|V|}{2}= \frac{L }{2}\binom {n}{k}.$$
Each vertex inserted into $P(I)$ deletes at least $L$ vertices, whence $|P(I)|\le\binom nk/L$.
We next show that the final active set is determined by the fingerprint. Given a set $P$, start with $A=V(G)$ and repeat the same deterministic process, using membership in $P$ in place of membership in $I$. Whenever the chosen vertex $v$ belongs to $P$, remove $v$ and all its current neighbors; otherwise, remove only $v$. Denote the final active set by $A(P)$. If $P=P(I)$, the fixed ordering ensures that this replay makes the same choice at every step as the original process, and hence $A(P(I))=A(I)$.

Let $$\mathfrak C=\{P\cup A(P):P=P(I)\text{ for some independent set }I\text{ of }G\}.$$
The argument above shows that every independent set $I$ is contained in $P(I)\cup A(P(I))$, which belongs to $\mathfrak C$,
and every $C\in\mathfrak C$ satisfies $e(C)\leq \frac{L }{2}\binom {n}{k}$.
Since each fingerprint satisfies $|P(I)|\le\binom nk/L$, we deduce that
$$|\mathfrak C| \leq \sum_{j=0}^{\lfloor \binom{n}{k}/L\rfloor}\binom{\binom{n}{k}}{j}.$$

It remains to estimate this upper bound.
Let $p=1/L$. For all sufficiently large $k$, we have $L\geq2$, and hence $0<p\leq1/2$. 
Recall that $H_2(p)=-p\log p-(1-p)\log(1-p)$.
By the binomial theorem, 
$$1=\sum_{j=0}^{\binom{n}{k}}\binom{\binom{n}{k}}{j}p^j(1-p)^{\binom{n}{k}-j}.$$
For every $j\leq p\binom{n}{k}$, since $p\leq 1-p$, we have $p^j(1-p)^{\binom{n}{k}-j}\geq p^{p\binom{n}{k}}(1-p)^{(1-p)\binom{n}{k}}=2^{-\binom{n}{k}H_2(p)}$.
It follows that
$$1\geq 2^{-\binom{n}{k}H_2(p)}\sum_{j=0}^{\left\lfloor p\binom{n}{k}\right\rfloor}\binom{\binom{n}{k}}{j},$$
and hence
$$\sum_{j=0}^{\left\lfloor p\binom{n}{k}\right\rfloor}\binom{\binom{n}{k}}{j}\leq 2^{\binom{n}{k}H_2(p)}.$$
Moreover, using $-(1-p)\log(1-p)\leq p\log e$, we obtain $H_2(p)\leq p\log\frac{1}{p}+p\log e=p\log\frac{e}{p}$.
Substituting $p=1/L$ yields
$$\sum_{j=0}^{\left\lfloor \binom{n}{k}/L\right\rfloor}\binom{\binom{n}{k}}{j}
\leq 2^{\binom{n}{k}H_2(1/L)}
\leq 2^{\frac{1}{L}\binom{n}{k}\log(eL)}.$$
Therefore, $\log|\mathfrak C|\leq (\binom nk/L)\log(eL)$.
A simple calculation shows that
\begin{equation*}
\frac{(\binom nk/L)\log(eL)}{(q/k)\binom{n-1}{k-1}}=\frac{nk\log(eL)}{\eta q^2\binom{k+q}{q}}.
\end{equation*}
Since $3\le q=n-2k<100\ln k$, $\binom{k+q}{q}\ge\binom{k+3}{3}$ and $\log(eL)=O(q\log k)$, so the last expression is $O(\log k/k)=o(1)$.
This proves the lemma.
\end{proof}

The following theorem shows that all but $o\bigl(2^{\binom{n-1}{k-1}}\bigr)$ intersecting families
have at most $\zeta\binom{n-1}{k-1}$ members outside one of the full stars.

\begin{theorem}\label{thm:reduce}
All but $o\bigl(2^{\binom{n-1}{k-1}}\bigr)$ intersecting families $\calF\subseteq\binom{[n]}k$ satisfy
\begin{equation*}
|\calF\setminus\calS_x|\le\zeta\binom{n-1}{k-1}
\end{equation*}
for at least one $x\in[n]$.
\end{theorem}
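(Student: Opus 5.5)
We combine the global container family $\mathfrak C$ of Lemma~\ref{lem:globalcontainer} with the stability result Theorem~\ref{thm:DT}. The aim is to show that every intersecting family lies in one of two kinds of container. The first kind is small enough that together such containers hold only $o(2^{\binom{n-1}{k-1}})$ families. The second kind is close to some star $\calS_x$, so every intersecting family inside it automatically has few members outside $\calS_x$.

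Fix a small constant $\eta>0$, to be chosen depending only on $\zeta$ and $C_{\rm DT}$, and let $\mathfrak C$ be the family from Lemma~\ref{lem:globalcontainer}. Every intersecting family is an independent set of $KG(n,k)$, so it is contained in some $C\in\mathfrak C$. Fix such a $C$ and write $|C|=(1-\alpha)\binom{n-1}{k-1}$.

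We first record the edge bound in the normalisation of Theorem~\ref{thm:DT}. Since $\binom nk=\frac nk\binom{n-1}{k-1}$ and $L=\eta q\binom{k+q}{q}/k$ with $\binom{k+q}{q}=\binom{n-k-1}{k-1}\cdot\frac{k+q}{q}$, we get
\[
e(C)\le \frac{L}{2}\binom nk=\frac{\eta n(k+q)}{2k^2}\binom{n-1}{k-1}\binom{n-k-1}{k-1}.
\]
Here $\frac{n(k+q)}{2k^2}=O(1)$. So the number of disjoint pairs in $C$ is at most $\beta\binom{n-1}{k-1}\binom{n-k-1}{k-1}$ with $\beta=O(\eta)$.

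Next we bound $\alpha$ from below using the spectral inequality of Theorem~\ref{thm:spectrum}. If $|C|=(1+\delta)\binom{n-1}{k-1}$ with $\delta>0$, that inequality gives
\[
2e(C)\ge \frac{\binom{n-k}{k}+\binom{n-k-1}{k-1}}{\binom nk}\,\delta(1+\delta)\binom{n-1}{k-1}^2 .
\]
The prefactor equals $\frac{n}{n-k}\binom{n-k-1}{k-1}\binom{n-1}{k-1}^{-1}$, so this yields $\delta=O(\eta)$. Hence $\alpha\ge -O(\eta)$. This is why $\eta$ must be small.

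Now split into two cases according to $\alpha$.

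\textbf{Case $\alpha\ge \tau$}, where $\tau=\frac{q}{(20C_{\rm DT})^2 n}\cdot\frac12$, i.e.\ $C$ is small. The number of intersecting families inside $C$ is at most $2^{|C|}\le 2^{(1-\tau)\binom{n-1}{k-1}}$. Since $\tau\gtrsim q/k$, summing over all containers gives at most
\[
|\mathfrak C|\,2^{(1-\tau)\binom{n-1}{k-1}}=2^{\binom{n-1}{k-1}-\Omega(\frac qk\binom{n-1}{k-1})+o(\frac qk\binom{n-1}{k-1})}=o\bigl(2^{\binom{n-1}{k-1}}\bigr),
\]
using property 3 of Lemma~\ref{lem:globalcontainer}. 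The $\Omega$ term wins because the constant in $\tau$ is fixed while $\log|\mathfrak C|=o(\frac qk\binom{n-1}{k-1})$.

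\textbf{Case $\alpha<\tau$}, so $|\alpha|\le \max\{\tau,O(\eta)\}$. The hypothesis of Theorem~\ref{thm:DT} requires $\max\{2|\alpha|,\beta\}\le \frac{q}{(20C_{\rm DT})^2n}$. This holds provided $\eta$ is small relative to $q/n\ge 1/(3k)$.

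This is the main obstacle: with $\eta$ fixed, $\beta=O(\eta)$ is not small compared with $q/n\approx q/(2k)$. To overcome it I would instead choose $\eta$ of order $c\,q/k$ for a small constant $c$. That means taking $L=c q^2\binom{k+q}{q}/k^2$ and rechecking that $\log|\mathfrak C|=O(\frac{nk^2\log L}{q^2\binom{k+q}{q}}\cdot\frac1k)\binom{n-1}{k-1}$ is still $o(\frac qk\binom{n-1}{k-1})$. This holds since $\binom{k+q}{q}\ge\binom{k+3}{3}\gg k^2\log k/q^3$ for $q\ge3$, and it is exactly where $q\ge3$ enters. Then $\beta=O(cq/k)$ and $\delta=O(cq/k)$ fall below the threshold.

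With the hypothesis verified, Theorem~\ref{thm:DT} gives a star $\calS_x$ with $|C\setminus\calS_x|\le|C\triangle\calS_x|\le C_{\rm DT}(|\alpha|+2\beta)\frac nq\binom{n-1}{k-1}$. The factor $(|\alpha|+2\beta)\frac nq$ is at most a constant times $\tau n/q+c$, which we make $\le\zeta$ by shrinking both the factor $\frac12$ in $\tau$ and the constant $c$. Every intersecting $\calF\subseteq C$ then satisfies $|\calF\setminus\calS_x|\le\zeta\binom{n-1}{k-1}$, and combining the two cases proves the theorem.
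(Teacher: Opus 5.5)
Your skeleton is the paper's: containers from Lemma~\ref{lem:globalcontainer}, a union bound over containers of size at most $(1-\Theta(q/k))\binom{n-1}{k-1}$, the spectral bound for oversized containers, then Theorem~\ref{thm:DT}. However, the ``main obstacle'' you identify comes from a wrong identity, and the remedy you adopt breaks the proof at $q=3$.

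The correct identity is $\binom{k+q}{q}=\binom{k+q}{k}=\frac{k+q}{k}\binom{n-k-1}{k-1}$, not $\frac{k+q}{q}\binom{n-k-1}{k-1}$. Hence $\frac L2\binom nk=\frac{\eta qn(k+q)}{2k^3}\binom{n-1}{k-1}\binom{n-k-1}{k-1}$, so $\beta\le 3\eta q/k$. Likewise, the spectral prefactor is exactly $\binom{n-k-1}{k-1}/\binom{n-1}{k-1}$ (without your extra factor $\frac{n}{n-k}$). So an oversized container satisfies $|C|/\binom{n-1}{k-1}-1\le L(n/k)^2/\binom{k+q}{q}\le 9\eta q/k$. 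Both quantities are $O(\eta q/k)$. For a \emph{fixed} small $\eta$ they therefore lie below the threshold $\frac{q}{(20C_{\rm DT})^2n}\ge\frac{q}{3(20C_{\rm DT})^2k}$ of Theorem~\ref{thm:DT}, and there is nothing to overcome. Keeping $\eta$ fixed, with your split at $\tau=\Theta(q/k)$ taken small enough that the Das--Tran output is at most $\zeta\binom{n-1}{k-1}$, is exactly the paper's proof.

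The modification $\eta=cq/k$, i.e.\ $L=cq^2\binom{k+q}{q}/k^2$, is where your argument actually fails. The container bound gives $\log|\mathfrak C|\le\frac{\binom nk}{L}\log(eL)=\frac{n\log(eL)}{qL}\cdot\frac qk\binom{n-1}{k-1}$. So you need $\binom{k+q}{q}\gg nk^2\log L/q^3$; your check ``$\binom{k+3}{3}\gg k^2\log k/q^3$'' drops the factor $n\approx 2k$. For $q=3$ you get $L\approx\frac32 ck$ and $\frac{n\log(eL)}{qL}\approx\frac{4\log k}{9c}\to\infty$. The available bound on $\log|\mathfrak C|$ then exceeds the saving $\tau\binom{n-1}{k-1}=\Theta(\frac qk\binom{n-1}{k-1})$ of your first case by a factor of order $\log k$. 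Your union bound over small containers therefore no longer gives $o\bigl(2^{\binom{n-1}{k-1}}\bigr)$. It would survive for $q\ge 4$, but $q=3$ is precisely the new case $n=2k+3$. As written, the argument does not cover the theorem's full range; the repair is to correct the identity and keep $\eta$ fixed.
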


\begin{proof}
Choose fixed $\delta,\eta>0$ sufficiently small that
\begin{equation}\label{eq:constants}
9\eta\le(20C_{\rm DT})^{-2}, \quad 3C_{\rm DT}(\delta+15\eta)\le\zeta, \quad\text{and} \quad 6(\delta+9\eta)\le(20C_{\rm DT})^{-2},
\end{equation}
where $C_{\rm DT}$ is given by Theorem~\ref{thm:DT}.
Apply Lemma~\ref{lem:globalcontainer} with the fixed value of $\eta$. 
Each $C\in \mathfrak C$ satisfying $|C|\le\binom{n-1}{k-1}-\delta\frac qk\binom{n-1}{k-1}$ contains at most
$2^{|C|}\leq 2^{\binom{n-1}{k-1}-\delta(q/k)\binom{n-1}{k-1}}$ independent sets. 
Therefore, by the union bound, all such containers contain at most
$|\mathfrak C|2^{\binom{n-1}{k-1}-\delta(q/k)\binom{n-1}{k-1}}$ independent sets. 
Since $\log|\mathfrak C|=o\left(\frac{q}{k}\binom{n-1}{k-1}\right)$ by Lemma~\ref{lem:globalcontainer},
this number is at most 
$$ 2^{\binom{n-1}{k-1}-\delta(q/k)\binom{n-1}{k-1} +o\left((q/k)\binom{n-1}{k-1}\right)}.$$
Moreover, $\frac{q}{k}\binom{n-1}{k-1}\to\infty$, so
$$
2^{-\delta(q/k)\binom{n-1}{k-1}
+o\left((q/k)\binom{n-1}{k-1}\right)}=o(1).
$$
Hence, all such $C$ with $|C|\le\binom{n-1}{k-1}-\delta\frac qk\binom{n-1}{k-1}$ contain altogether at most $o\!\left(2^{\binom{n-1}{k-1}}\right)$ independent sets.

Consider any remaining $C\in \mathfrak C$, and write $|C|=(1-\alpha)\binom{n-1}{k-1}$. 
If $|C|\le\binom{n-1}{k-1}$, then $0\leq\alpha<\delta q/k$. 
If $|C|>\binom{n-1}{k-1}$, Theorem~\ref{thm:spectrum} and $e(C)\le L\binom nk/2$ give
\begin{equation*}
\frac{|C|-\binom{n-1}{k-1}}{\binom{n-1}{k-1}}\le\frac{L}{\binom{k+q}{q}+\binom{k+q-1}{q}}\left(\frac nk\right)^2\le9\eta\frac qk.
\end{equation*}
Thus
\begin{equation}\label{eq:alpha-bound}
|\alpha|\le(\delta+9\eta)\frac qk.
\end{equation}
The number of disjoint pairs in $C$ is at most $L\binom nk/2$, and therefore
\begin{equation}\label{eq:beta-bound}
\beta:=\frac{e(C)}{\binom{n-1}{k-1}\binom{k+q-1}{q}}\le\eta\frac{qn(k+q)}{2k^3}\le3\eta\frac qk.
\end{equation}
The hypotheses of Theorem~\ref{thm:DT} follow from the combination of \eqref{eq:alpha-bound}, \eqref{eq:beta-bound}, the last two inequalities in \eqref{eq:constants}, and $n/k<3$. Hence some star $\calS_x$ satisfies
\begin{equation*}
|C\mathbin\triangle\calS_x|\le C_{\rm DT}(|\alpha|+2\beta)\frac nq\binom{n-1}{k-1}\le3C_{\rm DT}(\delta+15\eta)\binom{n-1}{k-1}\le\zeta\binom{n-1}{k-1}.
\end{equation*}
Every independent set $\calF\subseteq C$ then has $|\calF\setminus\calS_x|\le\zeta\binom{n-1}{k-1}$. Since independent sets of the Kneser graph are exactly intersecting families, the theorem follows.
\end{proof}

\section{Proof of Theorem~\ref{thm:main}}\label{sec:main-thm}

It remains to count non-trivial families near a fixed star. We prove the following estimate.

\begin{theorem}\label{thm:nearstar}
For every fixed $x\in[n]$, the number of non-trivial intersecting families $\calF$ satisfying 
$|\calF\setminus\calS_x|\le\zeta\binom{n-1}{k-1}$ is at most $2^{(1-\gamma/2)\binom{n-1}{k-1}}+2^{\binom{n-1}{k-1}-\binom{n-k-1}{k-1}/10}$.
\end{theorem}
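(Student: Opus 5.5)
Fix $x$ and write $\calA=\calF\setminus\calS_x$ and $\calB=\calF\cap\calS_x$. The plan is to translate the problem into the inclusion graph $\mathcal H$. Identify $A\in\calA$, a $k$-set avoiding $x$, with its complement $A^c=[n]\setminus(A\cup\{x\})\in X=\binom{[n]\setminus\{x\}}{k+q-1}$. Identify $B\in\calS_x$ with $B\setminus\{x\}\in Y$. Then $A\cap B=\varnothing$ exactly when $B\setminus\{x\}\subset A^c$, that is, when they are adjacent in $\mathcal H$. So, writing $\calA^*$ for the image of $\calA$ in $X$, the family $\calF$ is intersecting only if $\calB$ avoids $N(\calA^*)$ and $\calA$ itself is intersecting. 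For a fixed $\calA$ the number of choices of $\calB$ is therefore at most $2^{\binom{n-1}{k-1}-|N(\calA^*)|}$. Non-triviality forces $\calA\neq\varnothing$, so the target is
\[
\sum_{\varnothing\neq\calA,\ |\calA|\le\zeta\binom{n-1}{k-1}} 2^{-|N(\calA^*)|}\le 2^{-\gamma\binom{n-1}{k-1}/2}+2^{-\binom{n-k-1}{k-1}/10}.
\]
Since $N(\calA^*)$ depends only on the closure, $N([\calA^*])=N(\calA^*)$, I would sum over closures $\calC=[\calA^*]$ and count the subsets $\calA^*\subseteq\calC$ by $2^{|\calC|}$. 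This gives at most $\sum_{\calC}2^{|\calC|-|N(\calC)|}$, with some care because the map from $\calA$ to $\calC$ is many-to-one.

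\textbf{Large closure.} If $|[\calA^*]|\ge\binom{n-2-c}{k+q-1}$ for a suitable constant $c$, I would use Theorem~\ref{thm:isoperimetry}(i), which makes $|N|$ exceed $|[\calA^*]|$ by a constant factor. I would combine this with the entropy count $\binom{\binom{n-1}{k-1}}{\le\zeta\binom{n-1}{k-1}}\le 2^{H_2(\zeta)\binom{n-1}{k-1}}$ for choosing $\calA$. The condition $2H_2(\zeta)<\gamma/2$ should then give the first term $2^{(1-\gamma/2)\binom{n-1}{k-1}}$, provided $|N(\calA^*)|\ge\gamma\binom{n-1}{k-1}$ in this regime. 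The constant $\gamma=1/(2\cdot 4^9)$ suggests this comes from part (i) with $q\ge3$ and a closure threshold tied to $4^{-9}$. If instead $|N|$ is not that large, the closure is small and we move to the next case.

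\textbf{Small closure.} Here I would split $[\calA^*]$ into $2$-linked components $\calC_1,\dots,\calC_r$. Components at distance greater than $2$ have disjoint neighbourhoods, so $|N|$ is additive over components and the sum factorises. It then suffices to show
\[
\sum_{(a,g)}\ \sum_{\calA\in\calG(a,g)} 2^{a-g}\ \text{(weighted suitably)}\ \le\ \binom{n-1}{k}\,2^{-\binom{n-k-1}{k-1}/5},
\]
uniformly per root vertex. The product $\prod(1+\text{per-root sum})-1$ is then at most $2^{-\binom{n-k-1}{k-1}/10}$. This is the Hilton--Milner scale, since a single $A$ has $|N|=d_X=\binom{k+q-1}{k-1}=\binom{n-k-1}{k-1}$.

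Components of bounded size would be handled by Theorem~\ref{thm:isoperimetry}(ii) and Lemma~\ref{lem:connected}. The number of $2$-linked sets of size $a$ through a root is $(4\Delta)^{a}$ with $\Delta\le d_X d_Y$, while the gain is $2^{-g+a}$ with $g\ge d_X a/(5e)^q$, which dominates. For large $a$ I would apply Theorem~\ref{thm:biregular-container} with $\varphi\approx d_X/2$ and $\psi$ small. Each component is then reconstructed from its pair $(Q,R)$ by choosing $\calA^*\subseteq Q$ and using $N(\calA^*)\supseteq R$. The bound $|Q|\le \tfrac{d_Y}{d_X}|R|+\dots$ makes the cost $2^{|Q|-g}$ small compared with $2^{-g/(k\sqrt{d_X})}$ after the container count is paid.

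\textbf{Main obstacle.} The hardest step is this reconstruction estimate. I need the exponent loss from $|\calW(a,g)|$, of order $g\ln^2(d_Xd_Y)/d_X$ together with the binomial terms controlled via Lemma~\ref{lem:rounded-binomial-tail} in terms of $t=gd_Y-ad_X$, to be beaten by a saving of order $g/(k\sqrt{d_X})$. The saving must also survive the summation over $a,g$ and over roots. The first thing I would try is to use $t\gtrsim g d_Y/k$, which follows from isoperimetry since $|N|/|\calA|$ exceeds $d_Y/d_X$ by a factor $1+\Omega(1/k)$. I would then tune $\varphi,\psi,C$ as powers of $d_X$ so that every error term is $o(g/(k\sqrt{d_X}))$. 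This is exactly where $q\ge3$, with $d_X\gtrsim k^3$, should be needed.
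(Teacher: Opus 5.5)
Your reduction to $H$, the bound $2^{\binom{n-1}{k-1}-|N(\calA)|}$ on the choices of $\calB$, the split into $2$-linked components with disjoint neighbourhoods, and the treatment of components with $g\le d_X^3$ via Lemma~\ref{lem:connected} and Theorem~\ref{thm:isoperimetry}(ii) all match the paper. Your product $\prod(1+w)-1$ is a fine replacement for the paper's ordered parameter lists.

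There is one slip in the large-closure case. Theorem~\ref{thm:isoperimetry}(i) only applies to sets of size at most $\binom{n-2-c}{k+q-1}$, so it says nothing there. What you need is the plain edge count $|N(\calA)|\ge\frac{d_X}{d_Y}|[\calA]|>\frac{k}{k+q}T_0\ge\gamma\binom{n-1}{k-1}$ with $T_0=\binom{n-10}{k+q-1}$. Also, the entropy count runs over $X$, which has size at most $2\binom{n-1}{k-1}$, not over a set of size $\binom{n-1}{k-1}$.

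The genuine gap is the reconstruction step for large components. Counting $\calA\subseteq Q$ by $2^{|Q|}$ gives no saving at all. Since $d_Y/d_X=1+q/k>1$ and nothing stops $|R|$ from being close to $g$, the container only guarantees $|Q|\le(1+q/k+\theta)g$. So $2^{|Q|-g}$ can be as large as $2^{(q/k+\theta)g}$, and together with the $2^{\theta g}$ pairs your per-component weight is only bounded by $2^{(q/k+2\theta)g}\gg1$. No tuning of $\varphi,\psi,C$ fixes this. The problem is not the container cost; there is simply no gain available to pay it with.

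The gain has to come from the isoperimetric gap. For $a\le T_0$, Theorem~\ref{thm:isoperimetry}(i) with $c=8$ gives $a\le(1-7q/k)g$, which is the real reason for the choice of $T_0$. To use it, you must pin down $[\calA]$ inside $Q$ cheaply, so that only $2^{a}$ choices remain for $\calA$. Your closure framing is compatible with this, since $2^{|\calC|-|N(\calC)|}\le2^{-7(q/k)g}$ for such closed $\calC$. But it only helps once $\calC$ can be specified at cost well below $2^{7(q/k)g}$, and the naive $2^{|Q|}$ is far too large.

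The paper gets this encoding from a dichotomy.
\begin{enumerate}[label=(\roman*),leftmargin=2.0em]
\item If $|Q|<(1-4\theta)g$, the naive bound already works.
\item Otherwise $h=g-|R|\le1.05(q/k)g$ and $m=|Q|-a\ge6.96(q/k)g$. Fix one reference set $\calA_0$ per pair and put $N^*=N(\calA_0)$.
\begin{itemize}
\item Record $(N(\calA)\cap N^*)\setminus R$, which has at most $2^h$ possibilities.
\item Record an inclusion-minimal $Z\subseteq\calA\setminus[\calA_0]$ covering $N(\calA)\setminus N^*$. Private neighbours give $|Z|\le h$, and $Z$ lies in $Q\setminus[\calA_0]$, a set of size $m$.
\item These data determine $N(\calA)$, hence $[\calA]$. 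This gives at most $2^{a+h}\sum_{j\le h}\binom{m}{j}\le2^{(1-q/k)g}$ sets per pair, by an entropy computation.
\end{itemize}
This beats the $2^{\theta g}$ pairs because $\theta=o(q/k)$.
\end{enumerate}
Your plan has no such step. Without it, the summation over large components cannot close.
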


Before we move to the technical proof of Theorem~\ref{thm:nearstar}, we first show how this theorem 
will be applied to complete the proof of Theorem~\ref{thm:main}.

\begin{proof}[\textnormal{\textbf{Proof of Theorem~\ref{thm:main} using Theorem~\ref{thm:nearstar}}}]
If $n-2k\ge100\ln k$, the conclusion follows from Theorem~\ref{thm:BaloghGarciaLiWagner2024}.
We may suppose that $3\le n-2k<100\ln k$. 
Theorem~\ref{thm:reduce} leaves only $o\bigl(2^{\binom{n-1}{k-1}}\bigr)$ exceptional intersecting families. 
Every remaining family satisfies $|\calF\setminus\calS_x|\leq\zeta\binom{n-1}{k-1}$ for at least one of the $n$ stars, 
and Theorem~\ref{thm:nearstar} bounds the number of non-trivial families near each fixed star by
$$2^{(1-\gamma/2)\binom{n-1}{k-1}}+2^{\binom{n-1}{k-1}-\binom{n-k-1}{k-1}/10}.$$
Since $3\le n-2k<100\ln k$, we have $n<3k$ and $\binom{n-1}{k-1}\geq\binom{n-k-1}{k-1}\geq\binom{k+2}{3}$. Therefore,
$$n2^{-\gamma\binom{n-1}{k-1}/2}+n2^{-\binom{n-k-1}{k-1}/10}=o(1)$$ uniformly in this range. 
Together with Theorem~\ref{thm:reduce}, this gives $I(n,k,\geq 1)=o\bigl(2^{\binom{n-1}{k-1}}\bigr)$.
Thus, the two ranges cover every $n\ge2k+3$.

There are at most $n2^{\binom{n-1}{k-1}}$ trivial intersecting families. 
By the first two terms of inclusion--exclusion, their number is at least $n2^{\binom{n-1}{k-1}}-\binom n2 2^{\binom{n-2}{k-2}}=(n-o(1))2^{\binom{n-1}{k-1}}$, 
since $\binom n2 2^{-\binom{n-2}{k-1}}=o(1)$, as $\binom{n-2}{k-1}\geq n-2$. 
Hence the number of trivial intersecting families is $(n+o(1))2^{\binom{n-1}{k-1}}$. 
Combining this with the estimate for non-trivial families yields $I(n,k)=(n+o(1))2^{\binom{n-1}{k-1}}$ and $I(n,k,\geq 1)=o(I(n,k))$.
\end{proof}

\section{Proof of Theorem~\ref{thm:nearstar}}

Fix $x\in[n]$ and let
\begin{equation*}
X=\binom{[n]\setminus\{x\}}{k+q-1}\quad \text{and} \quad Y=\binom{[n]\setminus\{x\}}{k-1}.
\end{equation*}
Since $n=2k+q$, we have
\begin{equation*}
|X|=\binom{n-1}{k+q-1}=\binom{n-1}{k}=\frac{k+q}{k}\binom{n-1}{k-1}\quad \text{and}\quad |Y|=\binom{n-1}{k-1}.
\end{equation*}
Let $H$ be the bipartite graph with parts $X$ and $Y$, where $A\in X$ is adjacent to $B\in Y$ if and only if $B\subset A$. 
The degrees of the vertices in $X$ and $Y$, respectively, are
\begin{equation*}
d_X=\binom{k+q-1}{k-1}\quad \text{and} \quad d_Y=\binom{k+q}{q}=\frac{k+q}{k} d_X.
\end{equation*}
Thus $H$ is $(d_X,d_Y)$-biregular.

Given an intersecting family $\calF$, define
\begin{equation*}
\calA=\{([n]\setminus\{x\})\setminus F:F\in\calF,\ x\notin F\}\quad \text{and} \quad \calB=\{F\setminus\{x\}:F\in\calF,\ x\in F\}.
\end{equation*}
Then $\calA\subseteq X$ and $\calB\subseteq Y$.
We claim that $\calA\cup\calB$ is independent in $H$. 
To see this, take $A\in\calA$ and $B\in\calB$, and choose $F_A,F_B\in\calF$ such that $x\notin F_A$, $A=([n]\setminus\{x\})\setminus F_A$, $x\in F_B$, and $B=F_B\setminus\{x\}$.
If $AB$ is an edge in $H$, then $B\subset A=([n]\setminus\{x\})\setminus F_A$.
Thus $B\cap F_A=\varnothing$.
Since $F_B=B\cup\{x\}$ and $x\notin F_A$, $F_A\cap F_B=F_A\cap(B\cup\{x\})=(F_A\cap B)\cup(F_A\cap\{x\})=\varnothing$. 
This contradicts the assumption that $\calF$ is intersecting.
Hence $\calA\cup\calB$ is independent in $H$. 
Thus, once $\calA$ is fixed, the set $\calB$ must be contained in $Y\setminus N(\calA)$, and there are at most
\begin{equation}\label{eq:Bchoices}
2^{\binom{n-1}{k-1}-|N(\calA)|}
\end{equation}
choices for $\calB$.

Let
\begin{equation*}\label{eq:T0}
T_0=\binom{n-10}{k+q-1}.
\end{equation*}
Applying Theorem~\ref{thm:isoperimetry} with $c=8$, we get
\begin{equation}\label{eq:iso-large}
|\calA|\le T_0\quad\Longrightarrow\quad |N(\calA)|\ge|\calA|\left(1+\frac8{k+q-9}\right)^q,
\end{equation}
and
\begin{equation}\label{eq:iso-small}
|\calA|\le d_X^3\quad\Longrightarrow\quad |N(\calA)|\ge\frac{d_X}{(5e)^q}|\calA|.
\end{equation}

We now consider two cases according to the size of $[\calA]$.

\begin{claim}\label{claim:largeclosure}
The number of intersecting families $\calF$ satisfying $|\calF\setminus\calS_x|\le\zeta\binom{n-1}{k-1}$ and $|[\calA]|>T_0$ is at most
\begin{equation*}
2^{(1-\gamma/2)\binom{n-1}{k-1}}.
\end{equation*}
\end{claim}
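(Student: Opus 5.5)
The plan is to count pairs $(\calA,\calB)$ directly. The family $\calF$ is determined by $\calA$ and $\calB$, and $|\calA|=|\calF\setminus\calS_x|\le\zeta\binom{n-1}{k-1}$. By \eqref{eq:Bchoices}, once $\calA$ is fixed there are at most $2^{\binom{n-1}{k-1}-|N(\calA)|}$ choices for $\calB$. So it suffices to prove two things: a lower bound $|N(\calA)|\ge 2\gamma\binom{n-1}{k-1}$ whenever $|[\calA]|>T_0$, and an entropy bound on the number of admissible $\calA$.

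\textbf{Step 1 (neighborhood of a set with large closure).} By definition of closure, $N([\calA])=N(\calA)$.
\begin{itemize}
\item Since $|[\calA]|>T_0$, choose a subfamily $\calA'\subseteq[\calA]$ with $|\calA'|=T_0$.
\item By \eqref{eq:iso-large}, $|N(\calA)|\ge|N(\calA')|\ge T_0\bigl(1+\frac{8}{k+q-9}\bigr)^q\ge T_0$.
\item Write the ratio as
$$\frac{T_0}{|X|}=\frac{\binom{n-10}{k+q-1}}{\binom{n-1}{k+q-1}}=\prod_{j=1}^{9}\frac{k-j+1}{n-j},$$
a product of nine factors of the form $(k-j+1)/(n-j)$.
\item Since $n<3k$, each factor is at least $1/3>1/4$ for large $k$. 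Using $|X|\ge|Y|$, this gives $|N(\calA)|\ge 4^{-9}|Y|=2\gamma\binom{n-1}{k-1}$.
\end{itemize}
This step is the only place where the specific choices $T_0=\binom{n-10}{k+q-1}$ and $\gamma=1/(2\cdot 4^9)$ enter. I expect it to be the main (if mild) obstacle: one has to check that each of the nine factors really stays above $1/4$ uniformly for $3\le q<100\ln k$, including the $-j$ shifts. I would do this by the crude bound $(k-8)/n\ge (k-8)/(3k)$.

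\textbf{Step 2 (counting $\calA$).} The number of subsets of $X$ of size at most $\zeta|Y|$ is at most $2^{|X|H_2(\zeta|Y|/|X|)}$. This follows from the same binomial-theorem estimate used in the proof of Lemma~\ref{lem:globalcontainer}, valid since $\zeta|Y|/|X|\le\zeta\le 1/2$.

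Next use $|Y|\le|X|=\frac{k+q}{k}|Y|\le 2|Y|$. Because $p\mapsto H_2(p)/p$ is decreasing, the exponent $|X|H_2(\zeta|Y|/|X|)$ is increasing in $|X|$. Hence it is at most $2|Y|H_2(\zeta/2)\le 2H_2(\zeta)|Y|<\frac{\gamma}{2}|Y|$ by the choice of $\zeta$.

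\textbf{Step 3 (combining).} Summing over admissible $\calA$, the number of families is at most
$$2^{\frac{\gamma}{2}\binom{n-1}{k-1}}\cdot 2^{\binom{n-1}{k-1}-2\gamma\binom{n-1}{k-1}}\le 2^{(1-\gamma/2)\binom{n-1}{k-1}},$$
which is the bound claimed in Claim~\ref{claim:largeclosure} (with room to spare).
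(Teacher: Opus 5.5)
Your proof is correct and has the same structure as the paper's: a lower bound on $|N(\calA)|$ from $|[\calA]|>T_0$ via $N([\calA])=N(\calA)$, an entropy count of the sets $\calA$ with $|\calA|\le\zeta\binom{n-1}{k-1}$, and \eqref{eq:Bchoices}. The only difference is in Step 1. You apply \eqref{eq:iso-large} to a $T_0$-subset of $[\calA]$ to get $|N(\calA)|\ge T_0$. The paper instead uses plain double counting in the biregular graph, $|N(\calA)|\ge\frac{d_X}{d_Y}|[\calA]|>\frac{k}{k+q}T_0\ge\frac12T_0$. That loses a factor $1/2$, which is exactly what the definition $\gamma=1/(2\cdot4^9)$ absorbs, so your isoperimetric gain is valid but not needed.
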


\begin{proof}
Since $N([\calA])=N(\calA)$, edge counting between $[\calA]$ and its neighborhood gives
\begin{equation*}
|N(\calA)|\ge\frac{d_X}{d_Y}|[\calA]|>\frac{k}{k+q}T_0.
\end{equation*}
Since $n=2k+q$ and $q=o(k)$, for all sufficiently large $k$ we have
\begin{equation*}
\frac{T_0}{|X|}=\frac{\binom{n-10}{k+q-1}}{\binom{n-1}{k+q-1}}=\frac{\binom{n-10}{k-9}}{\binom{n-1}{k}}=\prod_{i=0}^{8}\frac{k-i}{n-1-i}\ge4^{-9}.
\end{equation*}
Together with the definition of $\gamma$, $k/(k+q)\ge1/2$ (as $k+q<2k$) and $|X|\ge\binom{n-1}{k-1}$, this yields $|N(\calA)|\ge\gamma\binom{n-1}{k-1}$.

The condition $|\calF\setminus\calS_x|\le\zeta\binom{n-1}{k-1}$ implies $|\calA|\le\zeta\binom{n-1}{k-1}$. 
The standard entropy estimate $\sum_{j\le pM}\binom Mj\le2^{H_2(p)M}$, valid for $0<p\le1/2$, and $|X|\le2\binom{n-1}{k-1}$ 
show that the number of possible $\calA$ is at most $2^{2H_2(\zeta)\binom{n-1}{k-1}}$. 
For each one, \eqref{eq:Bchoices} gives at most $2^{(1-\gamma)\binom{n-1}{k-1}}$ choices for $\calB$. 
This together with $2H_2(\zeta)<\gamma/2$ yields the result.
\end{proof}

\begin{claim}\label{claim:localcontainer}
There is a function $\theta=\theta(k,q)$ such that
\begin{equation*}
\frac1{k\sqrt{d_X}}\leq\theta=o\!\left(\frac qk\right)
\end{equation*}
and the following holds. 
If $a\le T_0$ and $g\ge d_X^3$, then there is a family $\calW(a,g)$ of at most $2^{\theta g}$ pairs $(Q,R)$ with $Q\subseteq X$ and $R\subseteq Y$ 
such that every $\calA\in\calG(a,g)$ is assigned a pair satisfying
\begin{equation*}
[\calA]\subseteq Q,\quad R\subseteq N(\calA),\quad |Q|\le\left(1+\frac qk\right)|R|+\theta g.
\end{equation*}
\end{claim}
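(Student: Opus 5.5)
We apply Theorem~\ref{thm:biregular-container} directly to the biregular graph $H$, with $d_X=\binom{k+q-1}{k-1}$ and $d_Y=\frac{k+q}{k}d_X$.
This gives $\frac{d_Y}{d_X}=1+\frac qk$, so the inequality $|Q|\le\frac{d_Y}{d_X}|R|+\ldots$ in that theorem already has the required main term.
The plan is to choose $\varphi,\psi,C$ so that every error term is at most $\theta g$ for some $\theta$ with $\frac1{k\sqrt{d_X}}\le\theta=o(q/k)$.
We then define $\theta$ as the maximum of $\frac1{k\sqrt{d_X}}$ and the resulting exponent rates, converted to base $2$.

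\textbf{Step 1: bounding $t$.}
First we bound $t=gd_Y-ad_X$.
Trivially $t\le gd_Y$.
This is already acceptable if the divisors $\varphi d_X$, $(d_Y-\varphi)\psi$ and $(d_X-\psi)\psi$ are large compared with $d_Y$, because each error term carries a factor of $t$ divided by one of these quantities.

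\textbf{Step 2: choice of parameters.}
We take $\psi=\lfloor\sqrt{d_X}\rfloor$ and $\varphi=\lfloor d_Y/2\rfloor$.
Then $d_Y-\varphi\ge d_Y/2$.
\begin{itemize}
\item The term $\frac{54t\ln d_Y\ln(d_Xd_Y)}{d_X(d_Y-\varphi)}$ is at most $O\bigl(g\ln^2 d_X/d_X\bigr)$.
\item By Lemma~\ref{lem:rounded-binomial-tail}, the binomial factors contribute $\exp\bigl(O(\frac{t}{d_Y\psi}\ln(\psi d_X))\bigr)=\exp\bigl(O(g\ln d_X/\sqrt{d_X})\bigr)$ and, similarly, $\exp\bigl(O(\frac{gd_Y}{d_X\psi}\ln(d_X))\bigr)$.
\item The first binomial factor has top argument $3Cgd_Y\ln d_X/(\varphi d_X)=O(Cg\ln d_X/d_X)$, so it contributes at most $2^{O(Cg\ln d_X/d_X)}$.
\item The first exponential term is $O(Cg\ln^2 d_X/d_X)$.
\item The prefactor $|Y|\le 2^{n}$ is negligible, since $g\ge d_X^3$ and $d_X\ge\binom{k+2}{3}$ give $n=o(g/d_X)$.
\end{itemize}
Since $d_X\ge\binom{k+q-1}{q}\ge k^3/6$, the worst of these, $g\ln d_X/\sqrt{d_X}$, is $O(g\, q\ln k\,/k^{3/2})=o(gq/k)$.
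That is comfortably $o(q/k)$ per unit of $g$, and it is also at least $g/(k\sqrt{d_X})$.

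\textbf{Step 3: the $m_\varphi$ term (main obstacle).}
The delicate term is $\frac{54g\ln(d_Xd_Y)}{d_X^{Cm_\varphi/(\varphi d_X)}}$.
For it we need a lower bound on $m_\varphi$: the neighbourhood of any $K\subseteq N(y)$ with $|K|>\varphi\approx d_Y/2$.
Here $N(y)$ is the set of $(k+q-1)$-sets containing a fixed $(k-1)$-set $y$, which is a copy of $\binom{[k+q]}{q}$.
By Theorem~\ref{thm:isoperimetry}(ii), applicable since $|K|\le d_Y\le d_X^3$, we get $|N(K)|\ge d_X|K|/(5e)^q$.
This gives $m_\varphi\ge d_Xd_Y/(2(5e)^q)$ and hence $Cm_\varphi/(\varphi d_X)\ge C/(5e)^q$.
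This exponent is too small when $q$ grows like $\ln k$, so Theorem~\ref{thm:isoperimetry}(ii) alone does not suffice.

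I would first try a Kruskal--Katona/Lov\'asz bound on the shadow of a set of size at least $\frac12\binom{k+q}{q}$ inside $\binom{[k+q]}{q}$.
Taking down-shadows to $(k-1)$-sets, it should give $|N(K)|\ge c\,d_Y\cdot\binom{n-1}{k-1}/\binom{k+q}{k-1}$ or similar.
This would make $m_\varphi/(\varphi d_X)$ at least a constant, or even polynomial in $k$.
Alternatively, Theorem~\ref{thm:isoperimetry}(i) could be applied with $c$ chosen as in \eqref{eq:iso-large}.
We then choose $C=\Theta\bigl(\ln d_X\cdot\varphi d_X/m_\varphi\bigr)$ so that $d_X^{Cm_\varphi/(\varphi d_X)}\ge d_X^{2}$, making this term $O(g\ln d_X/d_X^2)$.
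We must also keep $C\ln d_X/(\varphi d_X)<1$ and keep $C$ small enough that the $C$-dependent terms from Step~2 remain $o(q/k)\,g$.
Balancing these constraints is where I expect the real work.
With $\varphi\approx d_Y/2$, the quantity $\varphi d_X$ is huge, so a polynomial-size $C$ should be affordable, provided $m_\varphi\ge d_X^{1/2}\varphi d_X/d_X^{O(1)}$ up to polylogarithmic factors.
Finally, converting $\exp$ to base $2$ and absorbing constants gives $|\calW(a,g)|\le2^{\theta g}$, with the stated properties of the pairs $(Q,R)$ and the error term $\frac{\psi t}{d_X}\bigl(\frac1{d_X-\psi}+\frac1{d_Y-\psi}\bigr)=O(g\,d_Y/d_X^{3/2})\le\theta g$ inherited from Theorem~\ref{thm:biregular-container}.
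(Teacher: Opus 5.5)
Your overall route is the paper's: apply Theorem~\ref{thm:biregular-container} to $H$, use $d_Y/d_X=1+q/k$, and show that every error term is at most a coefficient $o(q/k)$ times $g$. Steps~1--2 are fine. Replacing $t$ by $gd_Y$ in the cutoffs is legitimate because $\binom{z}{\le u}$ is monotone in $u$, and this lets you skip the paper's lower bound $t>(q/k)d_Xg$. Your choices $\varphi\approx d_Y/2$, $\psi\approx\sqrt{d_X}$ work as well as the paper's $\varphi=d_X/2$, $\psi=\sqrt{d_X\ln d_X}$.

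However, the proof is not finished. Step~3, which you flag as the crux, is left as ``where I expect the real work'', and your reason for abandoning Theorem~\ref{thm:isoperimetry}(ii) is wrong. The exponent $Cm_\varphi/(\varphi d_X)\ge C/(5e)^q$ is small only if $C$ is small, and $C$ is a free parameter. The $C$-dependent terms of your Step~2 (the first exponential term and the first binomial factor) are all $O(Cg\ln^2 d_X/d_X)$. So the only question is whether $C$ of order $(5e)^q$ is affordable, and you never check this.

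It is affordable. The missing observation is $d_X=\binom{k+q-1}{q}\ge(k/q)^q$, so $(5e)^q/d_X\le(5eq/k)^q$. Take $C=10(5e)^q$, as the paper does. Then $Cm_\varphi/(\varphi d_X)>10$, so the $m_\varphi$-term is $O(g\ln d_X/d_X^{10})$, and the hypothesis $C\ln d_X/(\varphi d_X)<1$ holds trivially. Using $q\ge3$, $5eq/k<1$, $\ln d_X=O(q\ln k)$ and $q<100\ln k$, you get
\[
\frac kq\cdot\frac{C\ln^2 d_X}{d_X}\le 10\,\frac kq\Bigl(\frac{5eq}{k}\Bigr)^q\ln^2 d_X=O\Bigl(\frac{\ln^6 k}{k^2}\Bigr)=o(1).
\]
With this, your Step~2 bounds close the argument, and no Kruskal--Katona refinement of $m_\varphi$ is needed. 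Your speculative conditions, such as $m_\varphi\ge d_X^{1/2}\varphi d_X/d_X^{O(1)}$ or ``polynomial-size $C$'', point in the wrong direction. What matters is that $C=o\bigl((q/k)\,d_X/\ln^2 d_X\bigr)$, and $(5e)^q$ satisfies this even though it grows with $q$.
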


\begin{proof}
Let
\begin{equation*}
t=d_X\left(\left(1+\frac qk\right)g-a\right).
\end{equation*}
Applying \eqref{eq:iso-large} to $[\calA]$, whose neighborhood is $N(\calA)$, gives $g>a$. 
Hence,
\begin{equation}\label{eq:gdt}
\frac{gd_X}{t}=\frac{g}{(1+q/k)g-a}\le\frac{k}{q}\le d_X^{1/q},
\end{equation}
where the last inequality follows after an easy calculation from $d_X=\binom{k+q-1}{k-1}=\binom{k+q-1}{q}\ge(k/q)^q$.

Apply Theorem~\ref{thm:biregular-container} to $H$ with $\varphi=d_X/2$, $\psi=\sqrt{d_X\ln d_X}$ and $C=10(5e)^q$. 
If $K\subseteq N(y)$ with $y\in Y$ and $|K|>\varphi$, then $|K|\le d_Y<2d_X<d_X^3$, 
so Theorem~\ref{thm:isoperimetry} gives
\begin{equation*}
|N(K)|\geq \frac{d_X}{(5e)^q} |K| >\frac{\varphi d_X}{(5e)^q}.
\end{equation*}
Let $m_\varphi=\min\{|N(K)|:y\in Y,\ K\subseteq N(y),\ |K|>\varphi\}$. 
Clearly, $m_\varphi >\varphi d_X/(5e)^q$.
Thus $Cm_\varphi/(\varphi d_X)>10$. 
Also $C\ln d_X/(\varphi d_X)=20(5e)^q\ln d_X/d_X^2=o(1)$.
It follows from Theorem~\ref{thm:biregular-container} that there is a family $\calW(a,g)\subseteq2^X\times2^Y$ with
\begin{align}\label{eq:quoted-container-size}
|\calW(a,g)|\le{}&|Y|\exp\!\left(\frac{54Cg\ln d_X\ln(d_Xd_Y)}{\varphi d_X}+\frac{54g\ln(d_Xd_Y)}{d_X^{Cm_\varphi/(\varphi d_X)}}+\frac{54t\ln d_Y\ln(d_Xd_Y)}{d_X(d_Y-\varphi)}\right)\notag\\
&\times\binom{3Cgd_Y\ln d_X/(\varphi d_X)}{\le3Ct\ln d_X/(\varphi d_X)}\binom{gd_Y}{\le t/((d_Y-\varphi)\psi)}\binom{gd_Xd_Y}{\le t/((d_X-\psi)\psi)}.
\end{align}
Moreover, every $\calA\in\calG(a,g)$ is assigned a pair $(Q,R)\in\calW(a,g)$ such that
\begin{equation}\label{eq:quoted-container-structure}
[\calA]\subseteq Q,\quad R\subseteq N(\calA),\quad |Q|\le\frac{d_Y}{d_X}|R|+\frac{\psi t}{d_X}\left(\frac1{d_X-\psi}+\frac1{d_Y-\psi}\right).
\end{equation}

We call a pair $(Q,R)$ satisfying the conclusion of the claim a \emph{good pair}.
We first estimate the bound \eqref{eq:quoted-container-size} term by term. 

Since $g>a$ and $q<k$, we have
\begin{equation}\label{eq:t-bounds}
\frac qk d_Xg<t=d_X\left(\left(1+\frac qk\right)g-a\right)<2d_Xg.
\end{equation}
Recall that $d_Y<2d_X$, $\varphi=d_X/2$, $d_Y-\varphi\ge d_X/2$, and $Cm_\varphi/(\varphi d_X)>10$. 
Therefore
$$\frac{54Cg\ln d_X\ln(d_Xd_Y)}{\varphi d_X} \leq \frac{54Cg\ln d_X \cdot 3 \ln d_X}{d^2_X/2}=324\frac{Cg\ln^2 d_X}{d_X^2},$$
$$\frac{54g\ln(d_Xd_Y)}{d_X^{Cm_\varphi/(\varphi d_X)}}\leq \frac{54g\ln(d_Xd_Y)}{d_X^{10}}\leq 162\frac{g\ln d_X}{d_X^{10}}\leq 162\frac{Cg\ln^2 d_X}{d_X^2},$$
and
$$\frac{54t\ln d_Y\ln(d_Xd_Y)}{d_X(d_Y-\varphi)}\leq \frac{54t\cdot 2\ln d_X \cdot 3\ln d_X}{d_X \cdot d_X/2}=648\frac{t\ln^2 d_X}{d_X^2}\leq 648\frac{Ct\ln^2 d_X}{d_X^2}.$$
The natural logarithm of the exponential factor in \eqref{eq:quoted-container-size} is at most
\begin{equation*}
648\left(\frac{Cg\ln^2d_X}{d_X^2}+\frac{Ct\ln^2d_X}{d_X^2}\right).
\end{equation*}

We first verify that Lemma~\ref{lem:rounded-binomial-tail} is applicable to all three binomial factors. Since $\psi=\sqrt{d_X\ln d_X}<d_X$ for all sufficiently large $k$, equations \eqref{eq:t-bounds}, $g\geq d_X^3$, and $\varphi=d_X/2$ give
\begin{align*}
\frac{3Ct\ln d_X}{\varphi d_X}>\frac{6Cq\,d_X^2\ln d_X}{k}, \quad
\frac{t}{(d_Y-\varphi)\psi}>\frac{q\,d_X^2}{2k},\quad \text{and} \quad
\frac{t}{(d_X-\psi)\psi}>\frac{q\,d_X^2}{k}.
\end{align*}
Since $q\geq3$ and $d_X=\binom{k+q-1}{q}\geq\binom{k+2}{3}$, all three quantities are at least $2$ for sufficiently large $k$. Moreover, $t=gd_Y-ad_X\leq gd_Y$, and the remaining denominator factors are at least $1$. Thus, in each of the three binomial factors, the upper parameter is at least the corresponding cutoff parameter.

Since $q\geq3$ and $d_X\to\infty$, we may also suppose that
\begin{equation*}
\ln\left(6e d_X^{1/q}\right)\leq\ln d_X,
\quad
\ln\left(12e d_X^{2+1/q}\right)\leq5\ln d_X,
\quad \text{and} \quad
\ln\left(6e d_X^{3+1/q}\right)\leq5\ln d_X.
\end{equation*}

For the first binomial factor, Lemma~\ref{lem:rounded-binomial-tail}, $\varphi=d_X/2$, $d_Y<2d_X$, and \eqref{eq:gdt} give
\begin{align*}
\ln \binom{3Cgd_Y\ln d_X/(\varphi d_X)}{\leq 3Ct\ln d_X/(\varphi d_X)}
\leq\frac{3Ct\ln d_X}{\varphi d_X}\ln\left(\frac{3egd_Y}{t}\right)
\leq\frac{6Ct\ln d_X}{d_X^2}\ln\left(6e d_X^{1/q}\right)
\leq\frac{6Ct\ln^2 d_X}{d_X^2}.
\end{align*}

For the second binomial factor, Lemma~\ref{lem:rounded-binomial-tail}, $d_Y-\varphi\geq d_X/2$, $d_Y<2d_X$, $\psi<d_X$, and \eqref{eq:gdt} give
\begin{align*}
\ln\binom{gd_Y}{\leq t/((d_Y-\varphi)\psi)}
\leq\frac{t}{(d_Y-\varphi)\psi}\ln\left(\frac{3egd_Y(d_Y-\varphi)\psi}{t}\right)
\leq\frac{2t}{d_X\psi}\ln\left(12e d_X^{2+1/q}\right)
\leq 10\frac{t\sqrt{\ln d_X}}{d_X^{3/2}}.
\end{align*}

For the third binomial factor, since $\psi=o(d_X)$, we may suppose that $d_X-\psi\geq d_X/2$. 
Hence Lemma~\ref{lem:rounded-binomial-tail}, $d_Y<2d_X$, $\psi<d_X$, and \eqref{eq:gdt} give
\begin{align*}
\ln\binom{gd_Xd_Y}{\leq t/((d_X-\psi)\psi)}
\leq\frac{t}{(d_X-\psi)\psi}\ln\left(\frac{3egd_Xd_Y(d_X-\psi)\psi}{t}\right)
\leq\frac{2t}{d_X\psi}\ln\left(6e d_X^{3+1/q}\right)
\leq10\frac{t\sqrt{\ln d_X}}{d_X^{3/2}}.
\end{align*}

Combining these estimates, converting from natural to binary logarithms, and using $|Y|\leq2^n$, we obtain
\begin{equation}\label{eq:Wsimple}
\log|\calW(a,g)|\le n+1000\left(\frac{Ct\ln^2d_X}{d_X^2}+\frac{Cg\ln^2d_X}{d_X^2}+\frac{t\sqrt{\ln d_X}}{d_X^{3/2}}\right).
\end{equation}

The structural bound \eqref{eq:quoted-container-structure}, together with $\psi=\sqrt{d_X\ln d_X}$ and \eqref{eq:t-bounds}, yields
\begin{equation*}
|Q|\le\left(1+\frac qk\right)|R|+\frac{4\psi t}{d_X^2}\le\left(1+\frac qk\right)|R|+8g\sqrt{\frac{\ln d_X}{d_X}}.
\end{equation*}
Choose an absolute $C_0$ large enough and set
\begin{equation*}
\theta=C_0\left(\frac{n}{d_X^3}+\frac{C\ln^2d_X}{d_X}+\sqrt{\frac{\ln d_X}{d_X}}\right)+\frac1{k\sqrt{d_X}}.
\end{equation*}
Then $|Q|\le\left(1+q/k\right)|R|+\theta g$.
Since $g\ge d_X^3$, \eqref{eq:t-bounds} and \eqref{eq:Wsimple} imply $|\calW(a,g)|\le2^{\theta g}$.

It is enough to show that $\theta=o(q/k)$. We have $d_X\ge\binom{k+2}{3}$, $d_X\ge(k/q)^q$ and $\ln d_X=O(q\ln k)$. Since $5eq/k<1$ for large $k$,
\begin{equation*}
\frac{k}{q}\frac{10(5e)^q\ln^2d_X}{d_X}\le10\frac{k}{q}\left(\frac{5eq}{k}\right)^q\ln^2d_X=O\!\left(\frac{\ln^6k}{k^2}\right).
\end{equation*}
Furthermore,
\begin{equation*}
\frac{k}{q}\sqrt{\frac{\ln d_X}{d_X}}=O\!\left(\frac{\ln k}{\sqrt{k}}\right),\quad \frac{k}{q}\frac{n}{d_X^3}=o(1),\quad \text{and} \quad \frac{k}{q}\frac1{k\sqrt{d_X}}=o(1).
\end{equation*}
This proves $1/(k\sqrt{d_X})\leq\theta=o\!\left(q/k\right)$.
\end{proof}

Recall that
$$\calG(a,g)=\{\calA\subseteq X:\calA\text{ is a 2-linked set},\ |[\calA]|=a,\ |N(\calA)|=g\}.$$
\begin{claim}\label{claim:component}
For sufficiently large $k$, if $a\le T_0$, then
\begin{equation*}
|\calG(a,g)|\le
\begin{cases}
2^{g/2},&d_X\le g\le d_X^3,\\
2^{g(1-1/(k\sqrt{d_X}))},&g>d_X^3.
\end{cases}
\end{equation*}
\end{claim}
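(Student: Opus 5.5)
Both regimes rest on the same encoding: a $2$-linked $\calA$ is counted by first choosing a small amount of data and then recovering $\calA$ inside a short list of candidates. Throughout we use that $\calA\subseteq[\calA]$ and $|[\calA]|=a<g$ by \eqref{eq:iso-large}.

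\textbf{Small range, $d_X\le g\le d_X^3$.} Here $a<g\le d_X^3$, so \eqref{eq:iso-small} applies to $[\calA]$ and gives $g\ge d_X a/(5e)^q$, i.e. $a\le (5e)^q g/d_X$. In $H$, every vertex of $X$ is within distance $2$ of at most $d_Xd_Y<2d_X^2$ others. Hence a $2$-linked set is a connected set in the square graph on $X$, which has maximum degree at most $2d_X^2$.
\begin{enumerate}[label=(\roman*),leftmargin=2.0em]
\item Count the possible closures. $[\calA]$ is itself $2$-linked: each element of $[\calA]$ shares a neighbour with some element of $\calA$. So by Lemma~\ref{lem:connected} there are at most $|X|(8d_X^2)^{a-1}$ choices for $[\calA]$.
\item Choose $\calA\subseteq[\calA]$ in at most $2^a$ ways.
\item Combine. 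The total count is $2^{O(n)+a(3\log d_X+4)}$. Using $a\le(5e)^qg/d_X$, this exponent is at most
\[
O(n)+O\!\left(\frac{(5e)^q\log d_X}{d_X}\right)g .
\]
Since $d_X\ge(k/q)^q$, the coefficient of $g$ is $O(q\ln k\,(5eq/k)^q)=o(1)$. As $g\ge d_X\gg n$, the whole exponent is at most $g/2$.
\end{enumerate}

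\textbf{Large range, $g>d_X^3$.} Use Claim~\ref{claim:localcontainer}: choose $(Q,R)\in\calW(a,g)$ in at most $2^{\theta g}$ ways, with $[\calA]\subseteq Q$ and $R\subseteq N(\calA)$. Then reconstruct $\calA$ from $(Q,R)$ in two steps.
\begin{enumerate}[label=(\roman*),leftmargin=2.0em]
\item Specify $N(\calA)\setminus R$. The naive cost of $\binom{|Y|}{\le g-|R|}$ is too large. Instead, encode each extra neighbour via an element of $Q$ together with an index among its $d_X$ neighbours; alternatively, note that $N(\calA)\subseteq N(Q)$.
\item Choose $\calA\subseteq Q$ in $2^{|Q|}$ ways.
\end{enumerate}
Using $|Q|\le(1+q/k)|R|+\theta g$ and $|R|\le g$, we want the total exponent to be at most $g-g/(k\sqrt{d_X})$.

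\textbf{Main obstacle.} The crude total $2^{|Q|+\theta g}$ can reach $2^{(1+q/k)g+2\theta g}$, which exceeds $2^g$. The real task is to exploit the deficit $|Q|-a$ against $g$. The key inequality is the edge count
\[
a\,d_X\le g\,d_Y=(1+q/k)\,g\,d_X ,
\]
which is too weak by itself. The stronger form comes from \eqref{eq:iso-large}: $g\ge a(1+8/(k+q-9))^q\approx a(1+8q/k)$, i.e. $a\le g(1-cq/k)$ for some $c>0$.

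I would therefore not count all subsets of $Q$. Instead I would count subsets $\calA$ of size at most $a\le(1-cq/k)g$, with the requirement that $N(\calA)\supseteq R$ while $[\calA]$ has size exactly $a$. The saving relative to $2^{|Q|}$ must beat $q|R|/k+\theta g$, and since $\theta=o(q/k)$ this leaves roughly $g\cdot\Omega(q/k)$ to spare, far more than $g/(k\sqrt{d_X})$.

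The delicate point is that $|Q|$ itself may exceed $g$, so a subset count alone does not give the saving. My first attempt would be to combine the two facts $|[\calA]|=a$ and $[\calA]\subseteq Q$: first choose $[\calA]$ as an $a$-subset of $Q$, then choose $\calA\subseteq[\calA]$. This costs
\[
\log\binom{|Q|}{a}+a\le |Q|\,H_2\!\left(a/|Q|\right)+a ,
\]
which may still be too large. In that case I would fall back on an entropy bound that uses $R\subseteq N(\calA)$ to force many specific elements of $Q$ to be chosen, and so to gain the factor $2^{-g/(k\sqrt{d_X})}$. The $1/(k\sqrt{d_X})$ slack already built into $\theta$ suggests this is exactly where the lower-order error is absorbed.
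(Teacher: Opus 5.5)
Your argument for $d_X\le g\le d_X^3$ is correct and is the same as the paper's: the closure is $2$-linked, you apply Lemma~\ref{lem:connected} in the distance-two graph, allow $2^a$ choices inside the closure, and use $a\le(5e)^qg/d_X$ from \eqref{eq:iso-small}. For $g>d_X^3$, however, you have not given a proof. You correctly see that $2^{|Q|}$ sets per good pair is too many, but none of your concrete encodings recovers the needed saving, and the final ``fall back on an entropy bound'' is only a hope. Consider the critical regime $|Q|\approx(1+q/k)g$, $a\approx(1-7q/k)g$. Restricting to subsets of $Q$ of size at most $a$ gains essentially nothing there, since $a>|Q|/2$. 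Choosing $[\calA]$ as an $a$-subset of $Q$ costs about $m\log(e|Q|/m)$ bits with $m=|Q|-a\approx 8qg/k$. That is of order $(q/k)\log(k/q)\,g$, which swamps the isoperimetric saving $g-a\ge 7qg/k$ from \eqref{eq:iso-large}. Encoding $N(\calA)\setminus R$ through elements of $Q$ plus indices costs about $\log(|Q|d_X)$ bits per vertex. Finally, $|N(Q)|$ is not controlled by the container theorem at all.

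The missing idea is to reconstruct $\calA$ relative to a fixed representative of its good pair. For each $(Q,R)$, fix one assigned $\calA_0$ and set $\calA^*=[\calA_0]$, $N^*=N(\calA_0)$, $h=g-|R|$, $m=|Q|-a$. For any assigned $\calA$:
\begin{enumerate}[label=(\roman*),leftmargin=2.0em]
\item the set $(N(\calA)\cap N^*)\setminus R$ is one of $2^h$ subsets of $N^*\setminus R$;
\item $N(\calA)\setminus N^*=N(Z)\setminus N^*$ for an inclusion-minimal $Z\subseteq\calA\setminus\calA^*\subseteq Q\setminus\calA^*$, and private neighbours give $|Z|\le h$.
\end{enumerate}
This determines $N(\calA)$, hence $[\calA]$, so at most $2^{a+h}\binom{m}{\le h}$ sets are assigned to each pair. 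One then splits on $|Q|$. If $|Q|<(1-4\theta)g$, the trivial bound $2^{|Q|}$ times $2^{\theta g}$ pairs gives $2^{(1-3\theta)g}$; this is where $\theta\ge 1/(k\sqrt{d_X})$ enters. Otherwise the good-pair inequality forces $h\le 1.05\,qg/k$ while $m\ge 6.96\,qg/k$. An entropy computation, monotone in $g-a$, then gives $a+h+h\log(em/h)\le(1-q/k)g$, which absorbs the $2^{\theta g}$ pairs since $\theta=o(q/k)$. The reference set is what replaces your $\log(k/q)$ factor by a bounded one.
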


\begin{proof}
It is obvious that if $g<d_X$, then $\calG(a,g)=\emptyset$.
We first suppose that $d_X\leq g\leq d_X^3$. 
Let $J$ be the graph on $X$ in which two vertices are adjacent whenever their distance in $H$ is at most two. 
We first show that $[\calA]$ is connected in $J$. 
Since $\calA$ is $2$-linked, the graph $J[\calA]$ is connected. 
For each $C\in[\calA]\setminus\calA$, choose $B\in N(C)$. 
Since $N(C)\subseteq N(\calA)$, there is some $A\in\calA$ adjacent to $B$. 
Thus $C$ and $A$ have distance two in $H$, and hence they are adjacent in $J$. 
It follows that $J[[\calA]]$ is connected, equivalently, that $[\calA]$ is 2-linked in $H$.

Each vertex of $X$ has $d_X$ neighbors in $Y$, and each of these vertices has at most $d_Y$ neighbors in $X$. 
Therefore, $\Delta(J)\leq d_Xd_Y$.
By Lemma~\ref{lem:connected}, the number of possible sets $[\calA]$ of size $a$ is at most $|X|(4d_Xd_Y)^{a-1}$.
Once $[\calA]$ is fixed, there are at most $2^a$ choices for $\calA\subseteq[\calA]$. Since $d_Y<2d_X$ and $|X|\leq2^n$, for all sufficiently large $k$ we obtain
$$|\calG(a,g)|\leq |X|(4d_Xd_Y)^{a-1}2^a\leq 2^{n+12a\ln d_X}.$$

By the definition of the closure, $N([\calA])=N(\calA)$.
Indeed, $\calA\subseteq[\calA]$ yields $N(\calA)\subseteq N([\calA])$, while the definition of $[\calA]$ gives the reverse inclusion. 
By \eqref{eq:iso-large}, we obtain $g=|N([\calA])|>|[\calA]|=a$.
Thus $a<g\leq d_X^3$, and \eqref{eq:iso-small} gives $g\geq\frac{d_Xa}{(5e)^q}$, so $a\leq\frac{(5e)^qg}{d_X}$.
Moreover, since $\calA\neq\varnothing$, we have $g=|N(\calA)|\geq d_X$. Consequently,
$$\frac{n+12a\ln d_X}{g}\leq\frac{n}{d_X}+\frac{12(5e)^q\ln d_X}{d_X}.$$
Since $3\leq q=n-2k<100\ln k$, we have $n<3k$ and $d_X\geq\binom{k+2}{3}$, so $n/d_X=o(1)$. Also,
$$
d_X=\binom{k+q-1}{q}\geq\left(\frac{k}{q}\right)^q,$$
and hence
$$\frac{(5e)^q\ln d_X}{d_X}\leq\left(\frac{5eq}{k}\right)^q\ln d_X=o(1).$$
Therefore, $n+12a\ln d_X=o(g)$.
For all sufficiently large $k$, this implies $n+12a\ln d_X\leq g/2$, and hence
$$
|\calG(a,g)|\leq2^{g/2}.
$$

Now we suppose that $g>d_X^3$. 
Fix a pair $(Q,R)$ given by Claim~\ref{claim:localcontainer},
and define $\xi\in\mathbb R$ by $|Q|=(1+\xi/k)g$. 
From Claim~\ref{claim:localcontainer},
\begin{equation}\label{eq:xi-upper}
\frac\xi k\le\frac qk+\theta.
\end{equation}
If $\xi/k<-4\theta$, then every $\calA\in\calG(a,g)$ assigned to
$(Q,R)$ satisfies
$\calA\subseteq[\calA]\subseteq Q$. Hence at most
$2^{|Q|}\leq2^{(1-4\theta)g}$ such families are assigned to $(Q,R)$.
Since there are at most $2^{\theta g}$ good pairs, this case contributes at most
\[
2^{(1-3\theta)g}
\le
2^{g\left(1-\frac{3}{k\sqrt{d_X}}\right)},
\]
where we used $\theta\ge1/(k\sqrt{d_X})$.

It remains to consider $\xi/k\ge-4\theta$.  
Discard any pair $(Q,R)$ to which no set is assigned. 
For each remaining pair, choose one assigned set $\calA_0$ and define $\calA^*=[\calA_0]$ and $N^*=N(\calA_0)$. 
For any other assigned set $\calA$, let $h=g-|R|$ and $m=|Q|-a$.
Clearly, $h\geq 0$ and $m\geq 0$.
Both $N(\calA)$ and $N^*$ have size $g$ and contain $R$, and hence $|N^*\setminus R|=h$.  So after $R$ and $N^*$ are fixed, the set $(N(\calA)\cap N^*)\setminus R$ is an arbitrary subset of $N^* \setminus R$, and hence has at most $2^h$ possibilities. 
Choose an inclusion-minimal set $Z\subseteq\calA\setminus\calA^*$ such that $N(\calA)\setminus N^*\subseteq N(Z)$.
Such a set exists because every vertex of $N(\calA)\setminus N^*$ has a neighbor in $\calA\setminus\calA^*$. 
By minimality, every member $z\in Z$ has a vertex in $N(\calA)\setminus N^*$ adjacent to $z$ and to no other member of $Z$; this vertex is called a \emph{private neighbor} of $z$. 
These private neighbors are distinct, and hence $|Z|\le|N(\calA)\setminus N^*|\le g-|R|=h$. 
Moreover, $Z\subseteq Q\setminus\calA^*$, a set of size $m$. Once $N(\calA)\cap N^*$ and $Z$ are fixed,
\begin{equation*}
N(\calA)=(N(\calA)\cap N^*)\cup(N(Z)\setminus N^*),
\end{equation*}
so $[\calA]$ is fixed. 
The remaining members of $\calA$ can be chosen in at most $2^{a-|Z|}$ ways. 
Thus the number assigned to the good pair is at most
\begin{equation}\label{eq:reconstruct}
2^h\sum_{j=0}^{h}\binom mj2^{a-j}\leq 2^{a+h}\sum_{j=0}^{h}\binom mj.
\end{equation}

From \eqref{eq:iso-large} and Bernoulli's inequality, we obtain
\begin{equation}\label{eq:component-delta}
\frac ga\ge 1+\frac{8q}{k+q-9},
\qquad
1-\frac ag\ge \frac{8q}{k+9q-9}\ge \frac{7q}{k}
\end{equation}
for all sufficiently large $k$. The good pair inequality gives
\begin{equation}\label{eq:h-bound}
\frac hg\le \frac{\frac{q}{k}-\frac{\xi}{k}+\theta}{1+\frac{q}{k}}.
\end{equation}
Since $\theta=o(q/k)$ by Claim~\ref{claim:localcontainer}, we may suppose that $\theta\le q/(100k)$. 
Equations \eqref{eq:xi-upper} and \eqref{eq:h-bound}, together with \eqref{eq:component-delta}, then give
\begin{equation*}
\frac{k}{q}\left(1-\frac ag\right)\ge 7,
\qquad
-0.04\le \frac{\xi}{q}\le 1.01,
\qquad
0\le \frac{kh}{qg}\le 1.05.
\end{equation*}
Moreover,
\begin{equation*}
\frac mg
=1-\frac ag+\frac{\xi}{k}
=\frac qk\left[\frac{k}{q}\left(1-\frac ag\right)+\frac{\xi}{q}\right]
\ge \frac{6.96q}{k}.
\end{equation*}
The preceding bounds also imply that $h/g\le 1.05q/k$, and hence
\begin{equation*}
\frac hm\le \frac{1.05}{6.96}<\frac12.
\end{equation*}

If $h=0$, by \eqref{eq:component-delta}, then 
$$\frac1g\log \left(2^h\sum_{j=0}^{h}\binom mj2^{a-j}\right)\leq \frac1g\log \left(2^a\right)=\frac{a}{g}<1.$$

If $h\geq 1$, using $\sum_{j=0}^{h}\binom mj\le (em/h)^h$ in \eqref{eq:reconstruct}, an easy observation shows that 
\begin{align}\label{eq:entropy-calc}
\frac1g\log \left(2^h\sum_{j=0}^{h}\binom mj2^{a-j}\right)
&\le \frac1g\log \left(2^{a+h} \left(\frac{em}{h}\right)^h\right)\nonumber\\
&\le
1-\frac qk\left\{
\frac{k}{q}\left(1-\frac ag\right)
-\frac{kh}{qg}\left[
1+\log \left(
\frac{e\left(\frac{k}{q}\left(1-\frac ag\right)+\frac{\xi}{q}\right)}
{\frac{kh}{qg}}
\right)
\right]
\right\}.
\end{align}
We claim that the expression in braces is at least $1$. Keeping $\xi/q$ and $kh/(qg)$ fixed, this expression is increasing as a function of $\frac{k}{q}(1-a/g)$, since its derivative is
\begin{equation*}
1-
\frac{\frac{kh}{qg}}
{\left(\frac{k}{q}\left(1-\frac ag\right)+\frac{\xi}{q}\right)\ln 2}
\ge
1-\frac{1.05}{6.96\ln 2}>0.
\end{equation*}
It is therefore enough to replace $\frac{k}{q}(1-a/g)$ by $7$. For $h>0$, the resulting subtracted term is increasing in both $\xi/q$ and $kh/(qg)$. Indeed, its derivatives with respect to these two quantities are
\begin{equation*}
\frac{\frac{kh}{qg}}{\left(7+\frac{\xi}{q}\right)\ln 2}>0
\end{equation*}
and
\begin{equation*}
1+\log \left(
\frac{e\left(7+\frac{\xi}{q}\right)}
{\frac{kh}{qg}}
\right)-\frac1{\ln 2}
\ge
1+\log \left(\frac{6.96e}{1.05}\right)-\frac1{\ln 2}>0,
\end{equation*}
respectively. The case $h=0$ follows by continuity. Consequently,
\begin{equation*}
\frac{kh}{qg}\left[
1+\log \left(
\frac{e\left(7+\frac{\xi}{q}\right)}
{\frac{kh}{qg}}
\right)
\right]
\le
1.05\left[1+\log \left(\frac{8.01e}{1.05}\right)\right]<6.
\end{equation*}
Thus the expression in braces in \eqref{eq:entropy-calc} is greater than $7-6=1$. 
It follows that the right-hand side of \eqref{eq:entropy-calc} is at most $1-q/k$. 

After summing over the good pairs, this case contributes at most
\begin{equation*}
2^{(1-q/k+\theta)g}
\le
2^{g\left(1-\frac{3}{k\sqrt{d_X}}\right)}
\end{equation*}
sets for all sufficiently large $k$.

A short case analysis shows that the two cases together contribute at most $2^{1+g\left(1-\frac{3}{k\sqrt{d_X}}\right)}$.
Since $g>d_X^3$ implies $g/(k\sqrt{d_X})>1$, we obtain
\begin{equation*}
2^{1+g\left(1-\frac{3}{k\sqrt{d_X}}\right)}
\le
2^{g\left(1-\frac{2}{k\sqrt{d_X}}\right)}
\le
2^{g\left(1-\frac{1}{k\sqrt{d_X}}\right)},
\end{equation*}
as required.
\end{proof}

\begin{claim}\label{claim:smallclosure}
For the fixed $x$, the number of independent sets $\calA\cup\calB$ in $H$ with $\calA\ne\varnothing$ and $|[\calA]|\le T_0$ is at most
\begin{equation*}
2^{\binom{n-1}{k-1}-\binom{n-k-1}{k-1}/10}
\end{equation*}
for sufficiently large $k$.
\end{claim}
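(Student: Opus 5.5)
We count pairs $(\calA,\calB)$ by first fixing $\calA$ and then using \eqref{eq:Bchoices}, which gives at most $2^{\binom{n-1}{k-1}-|N(\calA)|}$ choices for $\calB$. It therefore suffices to bound
\[
\sum_{\calA}2^{-|N(\calA)|},
\]
where the sum ranges over nonempty $\calA\subseteq X$ with $|[\calA]|\le T_0$. The target is a bound of the form $2^{-\binom{n-k-1}{k-1}/10}$. Note that $\binom{n-k-1}{k-1}=\binom{k+q-1}{k-1}=d_X$, so the target is $2^{-d_X/10}$.

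\textbf{Step 1: decompose into $2$-linked components.} Write $\calA=\calA_1\cup\dots\cup\calA_r$ as its $2$-linked components. Distinct components are at distance greater than two in $H$, so their neighbourhoods are pairwise disjoint and $|N(\calA)|=\sum_i|N(\calA_i)|$. Moreover $[\calA_i]\subseteq[\calA]$, since $N(\calA_i)\subseteq N(\calA)$. Hence each component satisfies $a_i:=|[\calA_i]|\le T_0$, and Claim~\ref{claim:component} applies to it.

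Because $\calA$ is determined by the unordered set of its components, we obtain
\[
\sum_{\calA\ne\varnothing}2^{-|N(\calA)|}\le\prod_{\text{2-linked }\calA'}\bigl(1+2^{-|N(\calA')|}\bigr)-1\le\exp\Bigl(\sum_{\calA'}2^{-|N(\calA')|}\Bigr)-1,
\]
where the product and the sum run over $2$-linked sets $\calA'$ with $|[\calA']|\le T_0$. So it is enough to show that
\[
S:=\sum_{a\le T_0}\ \sum_{g}|\calG(a,g)|\,2^{-g}\le 2^{-d_X/5}.
\]
Then $e^S-1\le 2S\le 2^{-d_X/10}$ for large $k$.

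\textbf{Step 2: sum over $(a,g)$ using Claim~\ref{claim:component}.} Only $g\ge d_X$ can occur. We also have $a<g$ (from \eqref{eq:iso-large}), and $g\le|Y|\le 2^n$.

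For $d_X\le g\le d_X^3$, the contribution is at most
\[
\sum_{g\ge d_X} g\cdot 2^{-g/2}\le 2^{-d_X/2+O(\log d_X)}.
\]

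For $g>d_X^3$, each term is at most $g\,2^{-g/(k\sqrt{d_X})}$. Since $g/(k\sqrt{d_X})\ge d_X^{5/2}/k$, which is far larger than $\log g=O(n)$, the tail sum is at most $2^{-d_X^{2}}$, comfortably small.

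Adding the two ranges gives $S\le 2^{-d_X/2+o(d_X)}\le 2^{-d_X/5}$.

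\textbf{Main obstacle.} The delicate point is Step 1, specifically justifying the component decomposition. Two things must be checked: that neighbourhoods of different $2$-linked components are disjoint, which holds because a common neighbour would put two vertices at distance two; and that the closure bound passes to every component, so that Claim~\ref{claim:component} really applies to each. The remaining bookkeeping is straightforward: $d_X\to\infty$ makes every error term negligible, and the factor from summing over $a$ is absorbed by the slack between exponent $1/2$ and the target $1/10$.
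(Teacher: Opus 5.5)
Your proof is correct. It shares the paper's skeleton: decompose $\calA$ into $2$-linked components, use disjointness of their neighbourhoods and $[\calA_i]\subseteq[\calA]$ so that Claim~\ref{claim:component} applies to each component, and multiply by the bound \eqref{eq:Bchoices} for $\calB$.

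The difference is in how you aggregate over collections of components.

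The paper fixes ordered lists of component parameters $(a_i,g_i)$ and bounds the number of component tuples by $\prod_i|\calG(a_i,g_i)|$. It then counts the admissible parameter lists separately. For small and large components these are compositions of $g_s$ and $g_\ell$, weighted by $d_X^{3p_s}$ or $|X|^{p_\ell}$ for the closure sizes. This needs monotonicity estimates in $p_s$ and $p_\ell$, and a final sum over the pairs $(g_s,g_\ell)$. The resulting saving is $d_X/3-O(n)$.

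You instead use the polymer-type inequality $\sum_{\calA\neq\varnothing}\prod_i 2^{-|N(\calA_i)|}\le\prod_{\calA'}\bigl(1+2^{-|N(\calA')|}\bigr)-1\le e^S-1$. This reduces everything to the single-component sum $S=\sum_{a,g}|\calG(a,g)|2^{-g}$ and removes all composition counting. Ignoring the compatibility conditions between components is harmless here, exactly as in the paper. The only requirement is that $S$ be at most about $1$, and Claim~\ref{claim:component} gives this with enormous room. In fact your argument yields a saving of $d_X/2-O(\log d_X)$ before you weaken it, which is better than the paper's.

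Your route is shorter and less error-prone. The paper's route is more explicit about how many parameter configurations occur.

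One small point: in the range $g>d_X^3$, state explicitly that at most $|Y|\le 2^n$ values of $g$ occur. That is what your remark ``$\log g=O(n)$'' is implicitly using when you bound the tail sum by $2^{-d_X^2}$.
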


\begin{proof}
Let $\calA_1,\ldots,\calA_p$ be the $2$-linked components of $\calA$. 
We first show that their neighborhoods are pairwise disjoint. 
Suppose, for the sake of contradiction, that $N(\calA_i)\cap N(\calA_j)\neq\varnothing$ for some $i\neq j$. 
Choose $B\in N(\calA_i)\cap N(\calA_j)$, and then choose $A_i\in\calA_i$ and $A_j\in\calA_j$ adjacent to $B$. 
The vertices $A_i$ and $A_j$ have distance two in $H$, so they belong to the same $2$-linked component, a contradiction. 
Hence $N(\calA_i)\cap N(\calA_j)=\varnothing$ whenever $i\neq j$, and therefore $|N(\calA)|=\sum_{i=1}^{p}|N(\calA_i)|$.

We write $a_i=|[\calA_i]|$ and $g_i=|N(\calA_i)|$. Since $N(\calA_i)\subseteq N(\calA)$, every $C\in[\calA_i]$ satisfies $N(C)\subseteq N(\calA)$, and hence $[\calA_i]\subseteq[\calA]$. Thus $a_i\leq T_0$. Moreover, $N([\calA_i])=N(\calA_i)$, so \eqref{eq:iso-large} applied to $[\calA_i]$ gives $a_i<g_i$. Since each component is nonempty, we also have $g_i\geq d_X$.

Let
$$g_s=\sum_{i:g_i\leq d_X^3}g_i
\qquad\text{and}\qquad
g_\ell=\sum_{i:g_i>d_X^3}g_i.$$
Fix an ordering of $X$. We order the small components by their first
vertices in this ordering, and order the large components independently
in the same way. Thus every $\calA$ determines two ordered lists of
components. These two ordered lists determine the full collection of
components, so no additional interleaving information is needed.

Now fix the corresponding ordered lists of component parameters
$(a_i,g_i)$. For every $i$, we have
$\calA_i\in\calG(a_i,g_i)$. Hence the number of admissible ordered
tuples of components is at most
\begin{equation*}
\prod_{i=1}^{p}|\calG(a_i,g_i)|.
\end{equation*}
Indeed, an arbitrary tuple in this Cartesian product need not form the
collection of $2$-linked components of a common set: distinct components
must be disjoint, must have distance greater than two in $H$, and must
have disjoint neighborhoods. Ignoring these compatibility conditions
can only enlarge the count. Therefore, Claim~\ref{claim:component} gives
\begin{align*}
\prod_{i=1}^{p}|\calG(a_i,g_i)|
&\leq
\prod_{i:g_i\leq d_X^3}2^{g_i/2}
\prod_{i:g_i>d_X^3}
2^{g_i(1-1/(k\sqrt{d_X}))}\\
&=
2^{g_s/2+g_\ell(1-1/(k\sqrt{d_X}))}.
\end{align*}

For every admissible tuple, the neighborhoods of its components are
pairwise disjoint, and hence
\begin{equation*}
|N(\calA)|
=\sum_{i=1}^{p}|N(\calA_i)|
=\sum_{i=1}^{p}g_i
=g_s+g_\ell.
\end{equation*}
Consequently, \eqref{eq:Bchoices} yields at most
$2^{|Y|-g_s-g_\ell}$ choices for $\calB$. Thus, for fixed ordered
component-parameter lists, the total number of choices for
$(\calA,\calB)$ is at most
\begin{equation*}
2^{|Y|-g_s/2-g_\ell/(k\sqrt{d_X})}.
\end{equation*}

If $g_s=0$, there is one possible empty list of small-component parameters. Suppose that $g_s>0$ and that there are $p_s$ small components. Since each $g_i\geq d_X$, we have $p_s\leq g_s/d_X$. The ordered positive integers $g_i$ can be chosen in $\binom{g_s-1}{p_s-1}$ ways, and, since $a_i<g_i\leq d_X^3$, there are at most $d_X^{3p_s}$ choices for the corresponding closure sizes. Hence the number of small-component parameter lists is at most
$$\sum_{1\leq p_s\leq g_s/d_X}\binom{g_s-1}{p_s-1}d_X^{3p_s}
\leq
2^{K g_s\ln d_X/d_X}$$
for an absolute constant $K$. 
Since $\binom{g_s-1}{p_s}d_X^{3(p_s+1)}/\binom{g_s-1}{p_s-1}d_X^{3p_s}=(g_s-p_s)d^3_X/p_s>1$, the summands $\binom{g_s-1}{p_s-1}d_X^{3p_s}$ increase with $p_s$ in this range. If $m=\lfloor g_s/d_X\rfloor$, then the sum is at most
$$m\binom{g_s}{m}d_X^{3m}
\leq m(2ed_X^4)^m,$$
which gives the stated bound.

If $g_\ell=0$, there is one possible empty list of large-component parameters. Otherwise, let $p_\ell$ be the number of large components. Since every such component satisfies $g_i>d_X^3$, we have $p_\ell<g_\ell/d_X^3$. The ordered neighborhood sizes can be chosen in at most $\binom{g_\ell-1}{p_\ell-1}$ ways, while each closure size has at most $|X|$ possible values. Therefore, using $|X|\leq2^n$, the number of large-component parameter lists is at most
$$\sum_{1\leq p_\ell<g_\ell/d_X^3}\binom{g_\ell-1}{p_\ell-1}|X|^{p_\ell}
\leq
2^{K g_\ell(n+\ln d_X)/d_X^3}.$$
The summands again increase with $p_\ell$ in the stated range, and the bound follows from $\binom{m}{r}\leq(em/r)^r$.

Consequently, the total number of families under consideration is at most
\begin{equation*}
2^{|Y|}
\sum_{\substack{g_s,g_\ell\geq0\\1\le g_s+g_\ell\le |Y|}}
2^{-g_s/2-g_\ell/(k\sqrt{d_X})
+K g_s\ln d_X/d_X
+K g_\ell(n+\ln d_X)/d_X^3}.
\end{equation*}
For sufficiently large $k$, we have
$$\frac{K\ln d_X}{d_X}\leq\frac16$$
and
$$\frac{K(n+\ln d_X)}{d_X^3}\leq\frac{1}{2k\sqrt{d_X}}.$$
The second inequality follows from $n<3k$ and $d_X\geq\binom{k+2}{3}$. 
Hence every summand is at most
$$2^{-g_s/3-g_\ell/(2k\sqrt{d_X})}.$$

If $g_s>0$, then $g_s\geq d_X$. If $g_\ell>0$, then $g_\ell>d_X^3$. Since $\calA\neq\varnothing$, at least one of these cases occurs. Thus every summand has an exponential saving of at least
$$\min\left\{\frac{d_X}{3},\frac{d_X^{5/2}}{2k}\right\}
\geq\frac{d_X}{3},$$
where the last inequality follows from $d_X\geq\binom{k+2}{3}$ for all sufficiently large $k$.

Finally, there are at most $(|Y|+1)^2=2^{O(n)}$ possible pairs $(g_s,g_\ell)$. 
It follows that the total number of families is at most $2^{|Y|-d_X/3+O(n)}$.
Since $d_X\geq\binom{k+2}{3}\gg n$, the term $O(n)$ can be absorbed into the saving. 
Note that $d_X=\binom{n-k-1}{k-1}$.
Recalling that $|Y|=\binom{n-1}{k-1}$ and weakening the saving from $d_X/3$ to $d_X/10$, 
we obtain $$2^{\binom{n-1}{k-1}-\binom{n-k-1}{k-1}/10},$$ as required.
\end{proof}

\begin{proof}[Proof of Theorem~\ref{thm:nearstar}]
A non-trivial family in this class has $\calA\ne\varnothing$. 
If $|[\calA]|>T_0$, Claim~\ref{claim:largeclosure} gives at most $2^{(1-\gamma/2)\binom{n-1}{k-1}}$ possibilities. 
Otherwise, Claim~\ref{claim:smallclosure} gives at most $2^{\binom{n-1}{k-1}-\binom{n-k-1}{k-1}/10}$. 
Therefore, the number of non-trivial intersecting families $\calF$ satisfying 
$|\calF\setminus\calS_x|\le\zeta\binom{n-1}{k-1}$ is at most $2^{(1-\gamma/2)\binom{n-1}{k-1}}+2^{\binom{n-1}{k-1}-\binom{n-k-1}{k-1}/10}$.
\end{proof}

\section{Concluding Remarks}

The remaining case is $n=2k+2$, and it would be interesting to construct containers indexed by a nearby full star. 
In the present proof, we first construct global containers in the Kneser graph and then apply a stability result to show that every sufficiently large container is close to some full star $\calS_x$. 
A different approach would be to choose the center $x$ during the container construction and describe each intersecting family relative to $\calS_x$.

This may be useful for the boundary case. Indeed, when $q=n-2k$ is fixed, the present estimates give
$$
\log|\mathfrak C|=O\left(\frac{\binom{n-1}{k-1}\ln k}{k^{q-1}}\right),
$$
whereas the natural saving in the exponent near a full star has order
$
\binom{n-1}{k-1}/k.
$
The ratio between these two quantities has order $\ln k/k^{q-2}$, which tends to zero when $q\geq3$ but not when $q=2$. 
Thus, a direct extension of the present global container argument does not appear to cover $n=2k+2$. 
A container construction indexed from the beginning by a nearby full star may avoid this extra logarithmic factor and provide a possible way to study the remaining case.

\section*{Acknowledgements}
\noindent This research is supported by National Key R\&D Program of China under grant number 2024YFA1013900, NSFC under grant number 12471327, 
and the China Postdoctoral Science Foundation under Grant Number 2026M793375.
\section*{Declaration}
	
\noindent$\textbf{Conflict~of~interest}$
The author declares that they have no known competing financial interests or personal relationships that could have appeared to influence the work reported in this paper.
\vskip 2mm	
\noindent$\textbf{Data~availability}$
No data was used for the research described in the article.

\end{document}